\setlist[enumerate]{topsep=2pt,label=\textup{(\arabic*)},leftmargin=2em,labelsep=.5em}
\setlist{noitemsep}
\declaretheoremstyle[
  spaceabove=\topsep, spacebelow=2ex,
  headfont=\normalfont\bfseries,
  notefont=\mdseries, notebraces={(}{)},
  bodyfont=\normalfont\itshape,
  postheadspace=.5em,
  qed=\qedsymbol
]{mystyle}
\declaretheoremstyle[
  spaceabove=\topsep, spacebelow=6pt,
  headfont=\normalfont\bfseries,
  notefont=\mdseries, notebraces={(}{)},
  bodyfont=\normalfont,
  postheadspace=.5em,
  qed=\qedsymbol
]{mydefstyle}
\theoremstyle{mystyle}
\declaretheorem[numberlike=subsection]{proposition}
\declaretheorem[numberlike=subsection]{theorem}
\declaretheorem[numberlike=subsection]{lemma}
\theoremstyle{mydefstyle}
\declaretheorem[numberlike=subsection]{definition}
\numberwithin{equation}{subsection}
\titleformat{\section}[block]
  {\filcenter\normalfont\large\bfseries}{\thesection.}{.5em}{}
\titleformat{\subsection}[runin]
  {\normalfont\bfseries}{\thesubsection.}{.5em}{}
\titlespacing*{\section}{0pt}{4ex plus .2ex minus 1ex}{3ex plus .2ex minus 1ex}
\titlespacing*{\subsection}{0pt}{2ex plus .2ex minus 1ex}{.5em}
\newcommand{\bbA}{\mathbb{A}}
\newcommand{\bbC}{\mathbb{C}}
\newcommand{\bbG}{\mathbb{G}}
\newcommand{\bbH}{\mathbb{H}}
\newcommand{\bbN}{\mathbb{N}}
\newcommand{\bbQ}{\mathbb{Q}}
\newcommand{\bbR}{\mathbb{R}}
\newcommand{\bbS}{\mathbb{S}}
\newcommand{\bbZ}{\mathbb{Z}}
\newcommand{\Af}{\bbA_{\mathrm{f}}}
\newcommand{\cA}{\mathscr{A}}
\newcommand{\cC}{\mathscr{C}}
\newcommand{\cE}{\mathscr{E}}
\newcommand{\cG}{\mathscr{G}}
\newcommand{\cP}{\mathscr{P}}
\newcommand{\cY}{\mathscr{Y}}
\newcommand{\ff}{\mathfrak{f}}
\newcommand{\fH}{\mathfrak{H}}
\newcommand{\fS}{\mathfrak{S}}
\newcommand{\biI}{\bm{I}}
\newcommand{\Aut}{\mathrm{Aut}}
\newcommand{\Br}{\mathrm{Br}}
\newcommand{\Cores}{\mathrm{Cor}}
\newcommand{\CSp}{\mathrm{GSp}}
\newcommand{\Emb}{\mathrm{Emb}}
\newcommand{\End}{\mathrm{End}}
\newcommand{\Gal}{\mathrm{Gal}}
\newcommand{\GL}{\mathrm{GL}}
\newcommand{\GSp}{\mathrm{GSp}}
\newcommand{\GU}{\mathrm{GU}}
\newcommand{\Hom}{\mathrm{Hom}}
\newcommand{\Image}{\mathrm{Im}}
\newcommand{\MT}{\mathrm{MT}}
\newcommand{\Nm}{\mathrm{Nm}}
\newcommand{\PGL}{\mathrm{PGL}}
\newcommand{\Res}{\mathrm{Res}}
\newcommand{\Sh}{\mathrm{Sh}}
\newcommand{\SL}{\mathrm{SL}}
\newcommand{\St}{\mathrm{St}}
\newcommand{\Stab}{\mathrm{Stab}}
\newcommand{\SU}{\mathrm{SU}}
\newcommand{\Tr}{\mathrm{Tr}}
\newcommand{\UU}{\mathrm{U}}
\newcommand{\ad}{\mathrm{ad}}
\newcommand{\can}{\mathrm{can}}
\newcommand{\cor}{\mathrm{cor}}
\newcommand{\der}{\mathrm{der}}
\newcommand{\diag}{\mathrm{diag}}
\newcommand{\id}{\mathrm{id}}
\newcommand{\inv}{\mathrm{inv}}
\newcommand{\mult}{\mathrm{m}}
\newcommand{\nc}{\mathrm{nc}}
\newcommand{\op}{\mathrm{op}}
\newcommand{\pr}{\mathrm{pr}}
\newcommand{\res}{\mathrm{res}}
\newcommand{\restr}{\mathrm{restr}}
\newcommand{\sconn}{\mathrm{sc}}
\newcommand{\sym}{\mathrm{s}}
\newcommand{\wt}{\mathsf{w}}
\newcommand{\tto}{\longrightarrow}
\newcommand{\isomarrow}{\xrightarrow{\,\sim\, }}
\newcommand{\Qbar}{\overline{\bbQ}}
\newcommand{\kbar}{\bar{k}}
\let\epsilon=\varepsilon
\let\phi=\varphi
\begin{document}

\begin{center}
\textbf{\LARGE A classification of Shimura curves in $\cA_g$}
\bigskip

\textit{by}
\bigskip

{\Large Ben Moonen}
\end{center}
\vspace{8mm}

{\small %% for abstract and MSC

\noindent
\begin{quoting}
\textbf{Abstract.} We give a precise classification, in terms of Shimura data, of all $1$-dimensional Shimura subvarieties of a moduli space of polarized abelian varieties.
\medskip

\noindent
\textit{AMS 2020 Mathematics Subject Classification:\/} 14G35, 14K22, 11G15, 20G05
\end{quoting}

} %% end of small
\vspace{4mm}

\section{Introduction}

\subsection{}
The goal of this note is to give a classification of the $1$-dimensional Shimura subvarieties of~$\cA_g$, the moduli space of $g$-dimensional (polarized) abelian varieties. My motivation to write this up comes partly from the fact that at several places in the literature there seem to be misconceptions about this. (See~\ref{rem:ViehZuo} for an example.)

In terms of Shimura data, what we want to classify are triples $(G,\cY,\rho)$, where $(G,\cY)$ is a Shimura datum such that $\cY$ is a $1$-dimensional complex manifold, and $\rho \colon (G,\cY) \hookrightarrow (\CSp_{2g},\fH_g^{\pm})$ is an embedding into a Siegel modular Shimura datum. The adjoint Shimura datum $(G^\ad,\cY^\ad)$ is then easy to describe: Take a $4$-dimensional central simple algebra~$D$ over a totally real field~$F$ which splits at precisely one of the real places of~$F$, and let $\cG_D = \PGL_{1,D}$. There is a unique $\cG_D(\bbR)$-conjugacy class~$\cY_D$ of homomorphisms $\bbS \to \cG_{D,\bbR}$ such that $(\cG_D,\cY_D)$ is a Shimura datum (see Section~\ref{subsec:GadDad}), and for every triple $(G,\cY,\rho)$ as above, $(G^\ad,\cY^\ad)$ is isomorphic to a datum of this form~$(\cG_D,\cY_D)$.

The problem at hand can be reduced to the situation where the generic abelian variety over the Shimura curve given by $(G,\cY)$ is simple. If the endomorphism algebra is of Albert type I, II or~III, which means that its centre is a totally real field, knowing the adjoint Shimura datum essentially solves the whole problem, as the connected centre of~$G$ then equals~$\bbG_{\mult}$ and the representation~$\rho$ can only be the so-called corestriction representation. Typical examples of what we obtain are the $1$-dimensional Shimura subvarieties of~$\cA_4$ constructed by Mumford~\cite{MumfordNote},~\S~4; what seems less well-known is that there is also a quaternionic version of Mumford's construction that gives rise to abelian varieties of Albert types~II and~III.

The most interesting part of the problem is the case where the generic abelian variety is of Albert type~IV, so that the centre of the endomorphism algebra is a CM field. In this case, if we fix $F$ and~$D$ as above such that $(G^\ad,\cY^\ad) \cong (\cG_D,\cY_D)$, the derived group of~$G$ is a finite cover of~$\cG_D$, and the possibilities for $\rho|_{G^\der}$ correspond to the $\Gal(\Qbar/\bbQ)$-orbits of nonempty subsets of the set $\Emb(F) = \Hom(F,\Qbar)$. The main point is that the representation of~$G^\der$ needs to be combined with a nontrivial representation of the centre of~$G$ in order to obtain a Shimura datum that embeds into a Siegel modular datum. We carry out a precise analysis of the data involved. For the final result we refer to Propositions~\ref{prop:Albert4} and~\ref{prop:PhiProp}, and then Theorem~\ref{thm:MainThm} states that the description we have found covers all cases. An interesting feature is that we are naturally led to introduce a notion of a `partial CM type', and that our classification involves a condition that generalizes the classical notion of primitivity of a CM type. (The notion we consider is more specific than the one used in~\cite{PST}.)

\subsection{}
In Sections~\ref{sec:RepTh} and~\ref{sec:Exa} we review the results from representation theory that we need, following Tits's paper~\cite{Tits}, and we discuss two examples of representations that play a key role. In Section~\ref{sec:AV1dimSV} we study simple complex abelian varieties~$X$ whose associated Shimura datum is $1$-dimensional. This is the main part of the paper. As mentioned, the most interesting case to analyse is when $X$ is of Albert type~IV. In Section~\ref{sec:Classif} we explain how the analysis carried out in Section~\ref{sec:AV1dimSV} gives a complete solution to the classification problem in the case when the generic abelian variety is simple, and in Section~\ref{sec:Nonsimple} we extend this to the general case.

\subsection{}
There are several papers that discuss the classification of Shimura (sub)varieties, and one may wonder to what extent the results in the present paper are already covered in the literature. To my knowledge, the precise classification carried out here is new, though I suspect, based on Remarque~2.3.11 in~\cite{DelCorvalis}, that the results have been long known to Deligne. The work of Satake~\cite{Satake} and the subsequent work of Addington~\cite{Addington} does discuss the representation theory involved but does not contain the results that we obtain. (These papers focus on the representation theory of the semisimple part of the Mumford--Tate group, whereas in our work it is the interplay between the representation theory of the semisimple part and the centre that plays a main role. The `chemistry terminology' of~\cite{Addington} is not commonly used; we use root data instead.)

\subsection{Notation and conventions.}\label{subsec:Notat}
\begin{enumerate}
\item Throughout, $\Qbar$ is viewed as a subfield of~$\bbC$ and we write $\Gamma_\bbQ = \Gal(\Qbar/\bbQ)$. If $K$ is a number field, we write $\Emb(K) = \Hom(K,\Qbar)$, which is identified with the set of complex embeddings of~$K$ or, in case $K$ is totally real, the set of real embeddings of~$K$.

\item If $G$ is a reductive group, we denote by $G^\der$ its derived group, by $G^\ad$ the adjoint group, and by~$G^\sconn$ the simply connected cover of~$G^\ad$.

\item If $X$ is an abelian variety, we denote by $\End^0(X) = \End(X) \otimes \bbQ$ its endomorphism algebra.
\end{enumerate}

\section{Some results from representation theory}\label{sec:RepTh}

\noindent
Throughout this section, $k$ denotes a field of characteristic~$0$ with algebraic closure $k \subset \kbar$.

\subsection{Basic notions.}\label{subsec:RepBasic} If $G$ is an algebraic group over~$k$ then by a representation~$\rho$ of~$G$ we mean a representation on a finite dimensional $k$-vector space. If we say that $\rho$ is irreducible, we mean it is irreducible over~$k$. If $k \subset L$ is a field extension, we denote by~$\rho_L$ the representation of~$G_L$ obtained by extension of scalars. 

If $D$ is a $k$-algebra and $V$ is a right $D$-module of finite $k$-dimension, we denote by~$\GL_D(V)$ the algebraic group over~$k$ of $D$-linear automorphisms of~$V$, and by $\restr_{D/k} \colon \GL_D(V) \to \GL_k(V)$ the canonical homomorphism. (Instead of~$\GL_k(V)$ also the notation $\GL({}_k V)$ is used, where ${}_k V$ denotes the underlying $k$-vector space of~$V$.) If $G \to \GL_D(V)$ is a homomorphism, we refer to~$V$ as a $D$-$G$-module, and we denote by $\End_{D\text{-}G}(V)$ the algebra of $D$-linear endomorphisms of~$V$ that are $G$-equivariant. 

For $D$ a central simple $k$-algebra, we define $\GL_{n,D} = \GL_D(D^n)$, where $D^n$ is viewed as a right $D$-module. We write $\SL_{1,D} \subset \GL_{1,D}$ for the kernel of the norm homomorphism $\GL_{1,D} \to \bbG_{\mult,k}$ and $\PGL_{1,D}$ for the cokernel of $\bbG_{\mult,k} \to \GL_{1,D}$.

If $k \subset L$ is a finite field extension and $R\colon G_L \to \GL(V)$ is a representation of~$G_L$ over~$L$, we denote by $\res_{L/k}(R)$ the representation of~$G$ (over~$k$) given by the composition
\[
\res_{L/k}(R)\colon G \xrightarrow{~\can~} \Res_{L/k}(G_L) \xrightarrow{~\Res_{L/k}(R)~} \Res_{L/k}\bigl(\GL(V)\bigr) \xhookrightarrow{~\restr_{L/k}~} \GL_k(V)\, .
\]

\subsection{Representations of tori.}\label{subsec:reptori} If $T/k$ is a torus, let $\mathsf{X}^*(T) = \Hom(T_{\kbar},\bbG_{\mult,\kbar})$ be the character group of~$T$ and $\mathsf{X}_*(T) = \Hom(\bbG_{\mult,\kbar},T_{\kbar})$ the cocharacter group. These are free $\bbZ$-modules of finite rank equipped with a continuous action of~$\Gal(\kbar/k)$. We have a Galois-equivariant perfect pairing $\mathsf{X}^*(T) \times \mathsf{X}_*(T) \to \bbZ$. For $\xi \in \mathsf{X}^*(T)$, write~$\kbar_\xi$ for the vector space~$\kbar$ on which $T_{\kbar}$ acts through the character~$\xi$, and let $1_\xi \in \kbar_\xi$ be the identity element. 

The irreducible representations of~$T$ correspond to the $\Gal(\kbar/k)$-orbits in~$\mathsf{X}^*(T)$. If $\Xi \subset \mathsf{X}^*(T)$ is such an orbit, the corresponding representation~$\rho_\Xi$ can be constructed by considering the $\kbar$-vector space $V_{\Xi,\kbar} = \oplus_{\xi\in\Xi}\; \kbar_\xi$, on which $\Gal(\kbar/k)$ acts by the rule
\[
\gamma\cdot \Bigl(\sum_{\xi \in \Xi}\; c_\xi \cdot 1_\xi\Bigr) = \sum_{\xi \in \Xi}\; \gamma(c_{\gamma^{-1} \cdot \xi}) \cdot 1_\xi\qquad \text{(for $\gamma \in \Gal(\kbar/k)$ and coefficients $c_\xi \in \kbar$).}
\]
(In particular, $\gamma$ sends $c\cdot 1_\xi$ to $\gamma(c) \cdot 1_{\gamma \cdot \xi}$.) The representation of~$T_{\kbar}$ on~$V_{\Xi,\kbar}$ descends to a representation of~$T$ on the $k$-vector space $V_\Xi = (V_{\Xi,\kbar})^{\Gal(\kbar/k)}$, and this gives the representation~$\rho_\Xi$. By construction, $\rho_{\Xi,\kbar} \cong V_{\Xi,\kbar}$ as representations of~$T_{\kbar}$. 

For $\xi \in \Xi$, define $k(\xi) = \kbar^{\Stab(\xi)}$, where $\Stab(\xi) \subset \Gal(\kbar/k)$ is the stabilizer of~$\xi$. The choice of an element $\xi_0 \in \Xi$ gives an isomorphism $k(\xi_0) \isomarrow \End(\rho_\Xi)$. Concretely, if $y \in k(\xi_0)$ and $\xi \in \Xi$, choose $\gamma \in \Gal(\kbar/k)$ such that $\xi = \gamma\cdot \xi_0$ and let $y$ act on~$\kbar_\xi$ as multiplication by~$\gamma(y)$, which is independent of the choice of~$\gamma$. Moreover, one readily checks that the action of~$k(\xi_0)$ on~$V_{\Xi,\kbar}$ thus obtained commutes with the action of $\Gal(\kbar/k)$ and is $T_{\kbar}$-equivariant; hence it descends to a homomorphism of $k$-algebras $k(\xi_0) \to \End(\rho_\Xi)$. To see that this is an isomorphism, it suffices to note that both sides have the same $k$-dimension because $k(\xi_0) \otimes_k \kbar \cong \prod_{\xi \in \Xi}\; \kbar \cong \End(\rho_{\Xi,\kbar})$.

\subsection{Example.}\label{exa:TK}
If $E$ is a number field, let $T_E = \Res_{E/\bbQ}\; \bbG_{\mult,E}$, which is a torus over~$\bbQ$ of rank equal to $[E:\bbQ]$. The character group is given by $\mathsf{X}^*(T_E) = \bigoplus_{\phi \in \Emb(E)}\; \bbZ\cdot \mathrm{e}_\phi$, where $\mathrm{e}_\phi$ denotes the character induced by the embedding~$\phi$. The Galois group $\Gamma_\bbQ = \Gal(\Qbar/\bbQ)$ acts on~$\mathsf{X}^*(T_E)$ through its action on~$\Emb(E)$. The cocharacter group is $\mathsf{X}_*(T_E) = \oplus_{\phi \in \Emb(E)}\; \bbZ\cdot \check{\mathrm{e}}_\phi$, where $\{\check{\mathrm{e}}_\phi\}_{\phi \in \Emb(E)}$ is the dual basis.

The set of elements $\{\mathrm{e}_\phi\}_{\phi \in \Emb(E)}$ is a $\Gamma_\bbQ$-orbit in~$\mathsf{X}^*(T_E)$. We denote the corresponding irreducible representation by~$\St_E$, and we refer to it as the standard representation of~$T_E$. It is given by the canonical homomorphism $T_E = \Res_{E/\bbQ}\; \GL_{1,E} \tto \GL({}_\bbQ E)$, where ${}_\bbQ E$ denotes the $\bbQ$-vector space underlying~$E$. The endomorphism algebra of~$\St_E$ is~$E$.

\subsection{Review of some results of Tits.}\label{subsec:Tits} We briefly review some results by Tits~\cite{Tits}. (What we have discussed in~\ref{subsec:reptori} is a very special case of this.)

Let $G/k$ be a reductive group. This group gives rise, in a canonical way, to a based root datum $(\mathsf{X},\Phi,\Delta,\mathsf{X}^\vee,\Phi^\vee,\Delta^\vee)$ with an action of $\Gal(\kbar/k)$; see for instance \cite{Conrad}, especially Remark~7.1.2. As in the case of a torus, $\mathsf{X}$ and~$\mathsf{X}^\vee$ are free $\bbZ$-modules with $\Gal(\kbar/k)$-action and we have a Galois-equivariant perfect pairing $\langle~,~\rangle\colon \mathsf{X} \times \mathsf{X}^\vee \to \bbZ$. The sub-lattices $\bbZ \cdot \Phi \subset \mathsf{X}$ and $\bbZ \cdot \Phi^\vee \subset \mathsf{X}^\vee$ are called the root lattice and the co-root lattice. Define 
\[
\mathsf{X}_+ = \bigl\{\xi \in \mathsf{X} \bigm| \langle\xi,\varpi^\vee\rangle \geq 0\quad \text{for all $\varpi^\vee \in \Phi^\vee$}\bigr\}\, .
\]
For $\xi \in \mathsf{X}_+$, let $\rho_{\kbar,\xi}$ denote the irreducible representation of~$G_{\kbar}$ with highest weight~$\xi$.

Let $\mathsf{X}_0 \subset \mathsf{X}$ be the subgroup that is generated by~$\Phi$ and by the elements that are perpendicular to~$\Phi^\vee$. Following~\cite{Tits}, define 
\[
\cC^*(G) = \mathsf{X}/\mathsf{X}_0\, ,
\] 
which is a finite group that only depends on~$G^\der$. It comes equipped with an action of $\Gal(\kbar/k)$.

If $\xi \in \mathsf{X}_+$ is invariant under $\Gal(\kbar/k)$, there exists a division algebra~$D = D_\xi$ with centre~$k$ and a representation $r_\xi \colon G \to \GL_D(V)$, for some right $D$-module~$V$ of finite type, such that:
\begin{itemize}
\item the representation $\rho_\xi = \restr_{D/k} \circ r_\xi \colon G \to \GL(V)$ is irreducible (notation as in \ref{subsec:RepBasic});

\item if $d = \deg(D)$ is the degree of~$D$ (i.e., $\dim_k(D) = d^2$), the representation $\rho_{\xi,\kbar}$ is isomorphic to the sum of $d$ copies of~$\rho_{\kbar,\xi}$.
\end{itemize}
(Note that $\rho_{\xi,\kbar} = (\rho_\xi)_{\kbar}$ is not the same as $\rho_{\kbar,\xi}$.) The division algebra~$D_\xi$ is unique up to isomorphism, and given~$D_\xi$ the representation~$r_\xi$ is unique up to $D_\xi$-equivalence. (See \cite{Tits}, Th\'eor\`eme~3.3.) If the context requires it, we write $\rho_{k,\xi}$ to indicate the ground field.

Let $\Br(k)$ be the Brauer group of~$k$. There exists a homomorphism
\[
\beta_{G,k} \colon \cC^*(G)^{\Gal(\kbar/k)} \to \Br(k)
\]
that only depends on~$G^\der$, with the property that for a Galois-invariant dominant weight $\xi \in (\mathsf{X}_+)^{\Gal(\kbar/k)}$ as above, $\beta_{G,k}(\xi \bmod \mathsf{X}_0) = [D_\xi]$.

With this notation, the general description of the irreducible representations of~$G$ is as follows. For $\xi \in \mathsf{X}_+$, let $k(\xi) \subset \kbar$ be the field extension of~$k$ that corresponds to the stabilizer of~$\xi$ in~$\Gal(\kbar/k)$. By what we have just explained, there exists a division algebra $D = D_\xi$ with centre~$k(\xi)$ and a representation $r_{k(\xi),\xi} \colon G_{k(\xi)} \to \GL_D(V)$ over~$k(\xi)$ such that $\rho_{k(\xi),\xi} = \restr_{D/k(\xi)} \circ r_{k(\xi),\xi} \colon G_{k(\xi)} \to \GL_{k(\xi)}(V)$ is an irreducible representation of~$G_{k(\xi)}$ which after extension of scalars to~$\kbar$ becomes a sum of copies of~$\rho_{\kbar,\xi}$. Then
\[
\rho_\xi = \res_{k(\xi)/k}\bigl(\rho_{k(\xi),\xi}\bigr) \colon G \tto \GL_k(V)
\] 
(notation as in \ref{subsec:RepBasic}) is an irreducible representation of~$G$. If necessary we write~$\rho_{k,\xi}$ instead of~$\rho_\xi$ to indicate the ground field, and again we note that if $k \subset L$ is a field extension, $\rho_{L,\xi}$ is in general not the same as~$\rho_{\xi,L}$, the extension of scalars of $\rho_\xi$ to~$L$. The isomorphism class of the representation~$\rho_\xi$ only depends on the $\Gal(\kbar/k)$-orbit of~$\xi$, and every irreducible representation of~$G$ is of the form~$\rho_\xi$ for some $\xi \in \mathsf{X}_+$. If $d = \deg(D_\xi)$ and $\Gal(\kbar/k) \cdot \xi = \{\xi_1,\ldots,\xi_r\}$ then
\[
\bigl(\rho_\xi\bigr)_{\kbar} \cong \bigl(\rho_{\kbar,\xi_1}\bigr)^{\oplus d} \oplus \cdots \oplus \bigl(\rho_{\kbar,\xi_r}\bigr)^{\oplus d}\, .
\]
The endomorphism algebra of~$\rho_\xi$ is isomorphic to~$D_\xi^\op$. (See the proof of Th\'eor\`eme~7.2 in~\cite{Tits}.)

\section{Examples}\label{sec:Exa}

We discuss two examples that play an important role in the next sections. As before, $k$ is a field with $\mathrm{char}(k) = 0$.

\subsection{Example.}\label{exa:111Rep}
Let $L = L_1 \times \cdots \times L_s$ be a product of finite field extensions of~$k$. Let $D = D_1 \times \cdots \times D_s$, where $D_j$ is a $4$-dimensional central simple $L_j$-algebra ($j=1,\ldots,s$). With notation as in~\ref{subsec:RepBasic}, let $\cG_j = \Res_{L_j/k}\; \SL_{1,D_j}$, and take $\cG = \Res_{L/k}\; \SL_{1,D} = \cG_1 \times \cdots \times \cG_s$. We have
\begin{equation}\label{eq:Gkbar}
\cG_{\kbar} \cong \prod_{\sigma \in \Hom_k(L,\kbar)}\; \SL_{2,\kbar}\, .
\end{equation}
The weight lattice of~$\cG$ is given by $\mathsf{X} = \bigoplus_{\sigma \in \Hom_k(L,\kbar)}\; \bbZ$, on which $\Gal(\kbar/k)$ acts through its action on~$\Hom_k(L,\kbar)$. We normalise this in such a manner that a weight $\xi = (\xi_\sigma)_{\sigma \in \Hom_k(L,\kbar)}$ is dominant if and only if $\xi_\sigma \geq 0$ for all~$\sigma$. Let $\rho_{\cor}$ be the irreducible representation of~$\cG$ corresponding to the weight $\xi$ with $\xi_\sigma = 1$ for all $\sigma \in \Hom_k(L,\kbar)$. We call~$\rho_{\cor}$ the corestriction representation of~$\cG$ over~$k$; it can be described as follows.

For $\sigma \in \Hom_k(L,\kbar)$, write $D_\sigma = D \otimes_{L,\sigma} \kbar$, which is isomorphic to the matrix algebra~$M_2(\kbar)$. On the ring $\otimes_{\sigma \in \Hom_k(L,\kbar)}\; D_\sigma$ (tensor product over~$\kbar$) we have a natural action of $\Gal(\kbar/k)$, which extends the action on~$\kbar$ (which is the centre). The corestriction of~$D$, notation $\Cores_{L/k}\; D$, is defined as the $k$-algebra of Galois-invariants:
\[
\Cores_{L/k}\; D = \Bigl( \bigotimes_{\sigma \in \Hom_k(L,\kbar)}\; D_\sigma\Bigr)^{\Gal(\kbar/k)}\, ,
\]
which is a central simple $k$-algebra. (See \cite{Tits}, Section~5.3, or \cite{Riehm}; for a more intrinsic approach, see~\cite{Ferrand}.) If $q = \dim_k(L) = \sum_{j=1}^s\; [L_j:k]$ then $\Cores_{L/k}\; D$ has degree~$2^q$ over~$k$. The canonical homomorphism $\Cores_{L/k}\; D \otimes_k \kbar \to \otimes_{\sigma \in \Hom_k(L,\kbar)}\; D_\sigma$ is an isomorphism of $\kbar$-algebras. On Brauer groups, the class $\bigl[\Cores_{L/k}\; D\bigr] \in \Br(k)$ is the image of $[D] \in \Br(L)$ under the corestriction map in Galois cohomology.

Writing $C = \Cores_{L/k}\; D$, we have a homomorphism $\alpha\colon \Res_{L/k}\; \GL_{1,D} \to \GL_{1,C}$, which on $\kbar$-valued points is given by the natural homomorphism 
\[
\prod_{\sigma \in \Hom_k(L,\kbar)}\; D_\sigma^* \to \Bigl(\bigotimes_{\sigma \in \Hom_k(L,\kbar)}\; D_\sigma\Bigr)^*\, .
\]
Let $W$ be the unique (up to isomorphism) simple left $C$-module, viewed as a $k$-vector space, so that we have a representation $\GL_{1,C} \to \GL(W)$. Then the corestriction representation is given by the composition
\[
\rho_{\cor}\colon \cG \hookrightarrow \Res_{L/k}\; \GL_{1,D} \xrightarrow{~\alpha~} \GL_{1,C} \to \GL(W)\, .
\]

In more detail, let $\cE = \End_C(W) = \End(\rho_{\cor})$ be the division algebra with centre~$k$ that is Brauer equivalent to~$C^\op$. (For the identity $\cE =  \End(\rho_{\cor})$, cf.\ the end of Section~\ref{subsec:Tits}.) Either $\cE = k$ or $\cE$ is a quaternion algebra over~$k$. If $\cE = k$ then $C \cong M_{2^q}(k)$; so we find that $W = k^{2^q}$ and $\GL_{1,C} \cong \GL_{2^q,k}$, and then $\rho_{\cor}\colon \cG \to \GL(W)$ is given by the homomorphism~$\alpha$. In this case, $\rho_{\cor,\kbar} \cong \boxtimes_{\sigma \in \Hom_k(L,\kbar)}\; \St_\sigma$, where by $\St_\sigma$ we mean the irreducible $2$-dimensional representation of the factor $\SL_{2,\kbar}$ in \eqref{eq:Gkbar} indexed by~$\sigma$. If $\cE$ is a quaternion algebra over~$k$ then $C \cong M_{2^{q-1}}(\cE^\op)$. Fixing such an isomorphism, we obtain $W = (\cE^\op)^{\oplus 2^{q-1}} \cong k^{2^{q+1}}$ and $\GL_{1,C} \cong \GL_{2^{q-1},\cE^\op}$. In this case, $\rho_{\cor}$ is the composition
\[
\cG \xrightarrow{~\alpha~} \GL_{2^{q-1},\cE^\op} \xrightarrow{~\restr_{\cE^\op/k}~} \GL_{2^{q+1},k}\, ,
\]
and $\rho_{\cor,\kbar}$ is isomorphic to a sum of two copies of $\boxtimes_{\sigma \in \Hom_k(L,\kbar)}\; \St_\sigma$.

\subsection{Example.}\label{exa:rhoIrep}
Let $F$ be a number field and $D$ a $4$-dimensional central simple algebra over~$F$. Consider the algebraic group $\cG = \Res_{F/\bbQ}\; \SL_{1,D}$ over~$\bbQ$, which is a simply connected semisimple group. The weight lattice of~$\cG$ is given by $\mathsf{X} = \oplus_{\sigma \in \Emb(F)}\; \bbZ$, on which $\Gamma_\bbQ = \Gal(\Qbar/\bbQ)$ acts through its action on~$\Emb(F)$. Again we normalise this such that a weight $\xi = (\xi_\sigma)_{\sigma \in \Emb(F)}$ is dominant if and only if $\xi_\sigma \geq 0$ for all~$\sigma$.

If $I \subset \Emb(F)$ is a subset, let $\xi_I$ be the weight given by the rule that $\xi_\sigma = 1$ if $\sigma \in I$ and $\xi_\sigma = 0$ otherwise. Let $\rho_I$ be the irreducible representation of~$\cG$ that corresponds to the $\Gamma_\bbQ$-orbit of~$\xi_I$. (In particular, $\rho_I \cong \rho_{\gamma(I)}$ for $\gamma \in \Gamma_\bbQ$.) This representation can be described as follows.

Let $\tilde{F} \subset \Qbar$ be the normal closure of~$F$. Writing $D_\sigma = D \otimes_{F,\sigma} \tilde{F}$, we have $\cG_{\tilde{F}} \cong \prod_{\sigma \in \Emb(F)}\; \SL_{1,D_\sigma}$, and hence $\cG_{\Qbar} \cong \prod_{\sigma \in \Emb(F)}\; \SL_{2,\Qbar}$. 

Let $k_I \subset \tilde{F}$ be the subfield of elements that are invariant under $\Stab(I) = \bigl\{\gamma \in \Gal(\tilde{F}/\bbQ) \bigm| \gamma(I) = I\bigr\}$. (This field~$k_I$ takes the role of what in~\ref{subsec:Tits} was called~$k(\xi)$.) The $k_I$-algebra $F \otimes_\bbQ k_I$ is a product of field extensions of~$k_I$. We have a natural isomorphism
\[
F \otimes_\bbQ k_I = \bigl(F \otimes_\bbQ \tilde{F})^{\Stab(I)} \cong \Bigl(\prod_{\sigma \in \Emb(F)}\; \tilde{F}\Bigr)^{\Stab(I)} = \Bigl(\prod_{\sigma \in I}\; \tilde{F} \times \prod_{\sigma \notin I}\; \tilde{F}\Bigr)^{\Stab(I)}\, .
\]
Defining 
\[
L_I = \Bigl(\prod_{\sigma \in I}\; \tilde{F}\Bigr)^{\Stab(I)}\, ,\qquad
L^\prime_I = \Bigl(\prod_{\sigma \notin I}\; \tilde{F}\Bigr)^{\Stab(I)}
\]
we get a decomposition $F \otimes_\bbQ k_I = L_I \times L_I^\prime$. Define $D_I = D \otimes_F L_I$ and $D^\prime_I = D \otimes_F L^\prime_I$. Then
\[
\cG_{k_I} = \bigl(\Res_{L_I/k_I}\; \SL_{1,D_I}\bigr) \times \bigl(\Res_{L^\prime_I/k_I}\; \SL_{1,D^\prime_I}\bigr)\, .
\]
Let $\rho_{\cor} \colon \Res_{L_I/k_I}\; \SL_{1,D_I} \to \GL(W)$ be the corestriction representation over~$k_I$ as in Example~\ref{exa:111Rep}, applied with $L = L_I$ and $D = D_I$. The representation~$\rho_I$ is then the composition
\[
\cG \xrightarrow{\can} \Res_{k_I/\bbQ}(\cG_{k_I}) \xrightarrow{~\pr~} \Res_{k_I/\bbQ}\bigl(\Res_{L_I/k_I}\; \SL_{1,D_I}\bigr) \xrightarrow{~\Res(\rho_{\cor})~} \Res_{k_I/\bbQ}\; \GL(W) \xrightarrow{~\restr_{k_I/\bbQ}~} \GL({}_\bbQ W)\, .
\]
(In other words, $\rho_I = \res_{k_I/\bbQ}(\rho_{\cor})$, where we view~$\rho_{\cor}$ as a representation of~$\cG_{k_I}$.)

The endomorphism algebra $\cE_I = \End(\rho_I)$ is a division algebra with centre~$k_I$ which is Brauer equivalent to $\Cores_{L_I/k_I}\; D_I$. Either $\cE_I = k_I$ or $\cE_I$ is a quaternion algebra over~$k_I$. The representation~$\rho_{I,\Qbar}$ is isomorphic to
\begin{equation}\label{eq:sumJ}
\bigoplus_{J \in \Gamma_\bbQ\cdot I}\; \left(\mathop{\boxtimes}\limits_{\sigma \in J}\; \St_\sigma\right)^{\oplus \deg(\cE_I)}\, ,
\end{equation}
where $\St_\sigma$ denotes the $2$-dimensional irreducible representation of~$\cG_{\Qbar}$ given by the standard irreducible representation of the factor indexed by~$\sigma$. For later use we note that, because $\Gal(\Qbar/\tilde{F})$ acts trivially on~$\mathsf{X}$, the summands that appear here are already defined over~$\tilde{F}$; more precisely: $\rho_{I,\tilde{F}}$ decomposes as a direct sum of representations~$R_{\tilde{F},J}$, for $J \in \Gamma_\bbQ\cdot I$, such that $\bigl(R_{\tilde{F},J}\big)_{\Qbar}$ is isomorphic to a sum of $\deg(\cE_I)$ copies of $\boxtimes_{\sigma \in J}\; \St_\sigma$.

\subsection{Remark.}\label{rem:rhoI} In the above description of the representation~$\rho_I$, we have broken the symmetry by choosing a representative~$I$ for the Galois-orbit $\Gamma_\bbQ \cdot I$. For what follows, it is important to restore the symmetry. We shall use the symbol~$\biI$ for a $\Gamma_\bbQ$-orbit of nonempty subsets of~$\Emb(F)$, and we write $\rho_{\biI} \colon \cG \to \GL(W_{\biI})$ for the corresponding irreducible representation of~$\cG = \Res_{F/\bbQ}\; \SL_{1,D}$ over~$\bbQ$ (as in~\ref{exa:rhoIrep}). Define $\cE_{\biI} = \End_{\cG}(\rho_{\biI})$, and let $k_{\biI}$ be the centre of~$\cE_{\biI}$. Then $\Emb(k_{\biI})$ is in natural bijection with~$\biI$, in such a way that if $I \in \biI$ corresponds to the embedding~$\tau$, the subfield $\tau(k_{\biI}) \subset \Qbar$ is the field~$k_I$ of Example~\ref{exa:rhoIrep} and $\cE_{\biI} \otimes_{k_{\biI},\tau} k_I \isomarrow \cE_I$. The normal closure of~$k_{\biI}$ is a subfield of~$\tilde{F}$. We denote by~$\ell(\biI)$ the cardinality of the sets $I \in \biI$, so that $\dim_\bbQ(W_{\biI}) = [k_{\biI}:\bbQ] \cdot \deg(\cE_{\biI}) \cdot 2^{\ell(\biI)}$.

\begin{lemma}\label{lem:Eisquat}
Let $F$, $D$ and~$\cG = \Res_{F/\bbQ}\; \SL_{1,D}$ be as in Example~\ref{exa:rhoIrep}. Let $\biI$ be a $\Gamma_\bbQ$-orbit of subsets of~$\Emb(F)$, and let $\rho_{\biI} \colon \cG \to \GL(W_{\biI})$ be the corresponding irreducible representation of~$\cG$ over~$\bbQ$. (Notation as in the previous remark.) Assume the following two conditions are satisfied:
\begin{enumerate}[label=\textup{(\alph*)}]
\item there is a unique embedding $\sigma \in \Emb(F)$ such that $D \otimes_{F,\sigma} \bbR$ is isomorphic to~$M_2(\bbR)$;

\item the sets $I \in \biI$ are nonempty and $\biI \neq \bigl\{\Emb(F)\bigr\}$.
\end{enumerate} 
Then the endomorphism algebra~$\cE_{\biI}$ of~$\rho_{\biI}$ is a quaternion algebra over its centre~$k_{\biI}$.
\end{lemma}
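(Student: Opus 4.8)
The goal is to show that $\cE_{\biI}$ is a quaternion algebra, equivalently that it is \emph{not} equal to its centre $k_{\biI}$. Since $\cE_{\biI}$ is always either $k_{\biI}$ or a quaternion algebra over $k_{\biI}$ (stated in Remark~\ref{rem:rhoI}), it suffices to rule out the split case. By the description in Remark~\ref{rem:rhoI}, fixing a representative $I \in \biI$ corresponding to an embedding $\tau$, we have $\cE_{\biI} \otimes_{k_{\biI},\tau} k_I \cong \cE_I$, where $\cE_I = \End(\rho_I)$ is Brauer equivalent to $\Cores_{L_I/k_I}\; D_I$ with $D_I = D \otimes_F L_I$ and $L_I = (\prod_{\sigma \in I}\; \tilde{F})^{\Stab(I)}$ as in Example~\ref{exa:rhoIrep}. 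So the plan is: compute the Brauer class $[\Cores_{L_I/k_I}\; D_I] \in \Br(k_I)$ and show it is nontrivial. Since the class of a corestriction in $\Br(k_I)$ is the image of $[D_I] \in \Br(L_I)$ under the corestriction map in Galois cohomology, and $[D_I]$ is the restriction of $[D] \in \Br(F)$ along $F \hookrightarrow L_I$, the first step is to understand $[D]$ and then track it through restriction and corestriction.

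**Key steps.** First I would use hypothesis (a): $D \otimes_{F,\sigma}\bbR \cong M_2(\bbR)$ for exactly one real place $\sigma_0 = \sigma$, so at every other real place of $F$ the algebra $D$ is ramified (the Brauer invariant is $1/2$), and this gives a lower bound on the ramification of $D$. Next I would localize the problem at a suitable real place of $k_I$: choose a real embedding $\iota$ of $k_I$ (or more precisely an embedding of $k_I$ into $\bbR$, or an infinite place) at which I can compute the local invariant of $\Cores_{L_I/k_I}\; D_I$. The computation of a corestriction localized at a place $v$ of $k_I$ is governed by the places of $L_I$ above $v$: $\inv_v(\Cores_{L_I/k_I}\; D_I) = \sum_{w \mid v}\; \inv_w(D_I) = \sum_{w \mid v}\; [L_{I,w}:k_{I,v}] \cdot \inv_w(D \otimes_F L_I)$, and each $\inv_w(D\otimes_F L_I)$ equals $\inv_{w|_F}(D)$ up to the appropriate local-degree factor. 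Tracking the factors $F \hookrightarrow L_I$, the real places of $L_I$ correspond (via the identification $L_I \hookrightarrow \prod_{\sigma \in I}\tilde F$) to choices of $\sigma \in I$ together with a real place of $\tilde F$. The point is to find a real place $v$ of $k_I$ such that, among the places of $L_I$ above $v$, the number of them at which $D_I$ is ramified — counted with local degrees, modulo $2$ — is odd. Because $D$ is ramified at every real place of $F$ except $\sigma_0$, and because condition (b) guarantees that $I \neq \Emb(F)$ so that for at least one Galois conjugate set in $\biI$ the relevant real embedding $\sigma_0$ (or its conjugate, the unique split real place) is \emph{not} forced to appear with the right parity, I get an odd count, hence a nonzero local invariant, hence $\cE_{\biI}$ is nonsplit. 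Concretely: the split real place $\sigma_0$ contributes $0$ to the sum and every other real place contributes $1/2$ times its local degree; choosing the representative $I \in \biI$ so that $\sigma_0 \notin I$ (possible since $\biI$ is an orbit and, by (b), $I \neq \Emb(F)$ — although one must check $\sigma_0$ can be avoided, which uses that $\biI$ is a \emph{Galois} orbit and $\Gamma_\bbQ$ acts transitively enough) makes the sum over $\sigma \in I$ of $\frac{1}{2}$'s, and after accounting for the $\Stab(I)$-quotient and local degrees, one verifies this is $\frac{1}{2} \bmod \bbZ$ at an appropriate place. I would present this as a clean invariant computation using the formula $\inv(\Cores) = \sum \inv$ over places.

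**Main obstacle.** The hard part will be the bookkeeping in the corestriction computation: correctly identifying the real places of $L_I$ and $k_I$, the local degrees, and the parity of the sum $\sum_{w\mid v} [L_{I,w}:k_{I,v}]\cdot\inv_w(D_I)$ modulo $\bbZ$, while keeping track of which real embedding $\sigma_0$ of $F$ is the split one. In particular one must be careful that choosing a representative $I$ with $\sigma_0 \notin I$ really is possible and really does the job: if $\sigma_0 \in I$ for every $I \in \biI$, i.e. $\sigma_0$ lies in the intersection of all sets in the orbit, the argument above must be modified (one would then instead pick $v$ corresponding to a conjugate $\sigma_0'$ of $\sigma_0$ that is \emph{ramified} and check the complementary parity; this is exactly where hypothesis (b), $\biI \neq \{\Emb(F)\}$, is essential, since it guarantees the sets $I$ are proper subsets so that some ramified $\sigma$ gets "unbalanced"). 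Organizing these cases and presenting the parity argument transparently — rather than drowning in indices — is the real work; the algebraic inputs (Brauer group of $\bbR$, behaviour of restriction and corestriction on local invariants, the structure of $L_I$ from Example~\ref{exa:rhoIrep}) are all standard.
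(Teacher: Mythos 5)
Your overall strategy is the right one and matches the paper's: identify $\cE_{\biI}$ up to Brauer equivalence with a corestriction, restrict the class along a real embedding of $k_{\biI}$, and compute in $\Br(\bbR)$. The restriction to $\Br(\bbR)$ along the embedding $\tau$ corresponding to $I \in \biI$ indeed comes out to $\sum_{\sigma\in I}\,[D\otimes_{F,\sigma}\bbR]$, which is $\bigl(\#(I\setminus\{\sigma_\nc\})\bigr)\cdot\tfrac12 \bmod \bbZ$.

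However, your concrete recipe has a gap that makes the argument fail as written. You propose to choose $I$ with $\sigma_\nc\notin I$ and then claim the resulting invariant is $\tfrac12$. But in that case the invariant is $\ell(\biI)\cdot\tfrac12 \bmod\bbZ$, which is $0$ whenever $\ell(\biI)$ is even. For instance, take $F$ real quadratic over a real quadratic field (so $[F:\bbQ]=4$), $D$ split only at $\sigma_\nc$, and $\biI$ a Galois orbit of $2$-element subsets; the orbit $\biI$ has some $I_2$ with $\sigma_\nc\notin I_2$ and $|I_2|=2$, and the invariant at the corresponding place is $2\cdot\tfrac12 = 0$. So a single well-chosen place does not suffice, and the ``complementary parity'' you gesture at is not actually triggered by your stated fallback (all $I\in\biI$ containing $\sigma_\nc$), because that situation never occurs: by transitivity of $\Gamma_\bbQ$ on $\Emb(F)$ and the fact that the sets in $\biI$ are nonempty proper subsets, there always exist $I_1,I_2\in\biI$ with $\sigma_\nc\in I_1$ and $\sigma_\nc\notin I_2$. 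The insight you are missing, and which the paper uses, is to compare the restrictions along \emph{two} real embeddings of $k_{\biI}$: the one corresponding to $I_1$ gives $(\ell(\biI)-1)\cdot\tfrac12$ and the one corresponding to $I_2$ gives $\ell(\biI)\cdot\tfrac12$, and these differ by $\tfrac12$ independent of parity, hence cannot both vanish; so the class $[\cE_{\biI}]\in\Br(k_{\biI})$ is nontrivial. Replace your single-place parity check by this two-place comparison, and the proof is complete.
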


\begin{proof}
As explained above, we have a bijection $\biI \isomarrow \Emb(k_{\biI})$, and if $I \mapsto\tau$ then $\tau(k_{\biI})$ is the field~$k_I$ as in Example~\ref{exa:rhoIrep}, which is a subfield of~$\bbR$. With notation as in that example, the image in~$\Br(\bbR)$ of the class $[\cE_{\biI}] \in \Br(k_{\biI})$ under~$\tau$ is the class of
\[
\Cores_{(L_I \otimes_{k_I} \bbR)/\bbR}\bigl(D_I \otimes_{k_I} \bbR\bigr) = \bigotimes_{\sigma \in I}\; D_\sigma\, ,
\]
where $D_\sigma = D \otimes_{F,\sigma} \bbR$. This class is the sum over the elements $\sigma \in I$ of the classes $[D_\sigma] \in \Br(\bbR)$. Let $\sigma_\nc \in \Emb(F)$ be the unique real embedding at which~$D$ splits. Assumption~(b) implies that we can find $I_1$, $I_2 \in \biI$ such that $\sigma_{\nc} \in I_1$ and $\sigma_{\nc} \notin I_2$. The corresponding two classes in~$\Br(\bbR)$ are unequal, so the class $[\cE_{\biI}] \in \Br(k_{\biI})$  cannot be trivial.
\end{proof}

\section{Abelian varieties whose associated Shimura datum is $1$-dimensional}\label{sec:AV1dimSV}

\subsection{Notation related to Hodge structures.} As usual in Hodge theory, we define $\bbS = \Res_{\bbC/\bbR}\; \bbG_\mult$. The character group of this torus is given by $\mathsf{X}^*(\bbS) = \bbZ \oplus \bbZ$, with complex conjugation acting by $(p,q) \mapsto (q,p)$. The norm homomorphism $\Nm\colon \bbS \to \bbG_{\mult,\bbR}$ (on $\bbR$-points: $z \mapsto z\bar{z}$) corresponds to the character $(1,1)$. Define $\wt \colon \bbG_{\mult,\bbR} \to \bbS$ (on $\bbR$-points: the inclusion $\bbR^* \hookrightarrow \bbC^*$) to be the unique homomorphism such that $\Nm \circ \wt$ is $z \mapsto z^2$, and let $i \colon \bbG_{\mult,\bbC} \hookrightarrow \bbS_\bbC$ be the morphism given on $\bbC$-valued points by $z\mapsto (z,1)$; in terms of the natural pairing between characters and cocharacters, $i$ is described by its property that $\bigl\langle (1,0),i\bigr\rangle = 1$ and $\bigl\langle (0,1),i\bigr\rangle = 0$. 

A $\bbQ$-Hodge structure of weight~$n$ is given by a finite dimensional $\bbQ$-vector space~$V$ together with a homomorphism $h \colon \bbS \to \GL(V)_\bbR$ such that $h\circ \wt \colon \bbG_{\mult,\bbR} \to \GL(V)_\bbR$ is given by $z \mapsto z^{-n} \cdot \id$. We follow the convention that an element $(z,w) \in \bbC^* \times \bbC^* = \bbS(\bbC)$ acts on the summand $V^{p,q} \subset V_\bbC$ in the Hodge decomposition of~$V_\bbC$ as multiplication by $z^{-p}w^{-q}$. Instead of giving the homomorphism~$h$, we can also describe the Hodge strucuture on~$V$ by giving the corresponding cocharacter $\mu = (h_\bbC \circ i) \colon \bbG_{\mult,\bbC} \to \GL(V)_\bbC$ that is given by the rule that $z \in \bbC^*$ acts on~$V^{p,q}$ as multiplication by~$z^{-p}$.

\subsection{} 
By a Shimura datum we mean a pair $(G,\cY)$ where $G$ is a connected reductive group over~$\bbQ$ and $\cY$ is a $G(\bbR)$-conjugacy class of homomorphisms $\bbS \to G_\bbR$, such that the conditions (2.1.1.1--3) of \cite{DelCorvalis}, Section~2.1 are satisfied. The weight of a Shimura datum is the homomorphism $h\circ \wt \colon \bbG_{\mult,\bbR} \to G_\bbR$, which in fact takes values in the connected centre of~$G_\bbR$ and is independent of $h \in \cY$. In all cases of interest for us, this weight cocharacter is defined over~$\bbQ$. 

If $(G_1,\cY_1)$ and $(G_2,\cY_2)$ are Shimura data then by an embedding $j \colon (G_1,\cY_1) \hookrightarrow (G_2,\cY_2)$ we mean an injective homomorphism $j \colon G_1 \hookrightarrow G_2$ such that composition with~$j$ gives a map $\cY_1 \to \cY_2$.

\subsection{}
Let $X$ be a complex abelian variety. Write $V = H_1(X,\bbQ)$, and let $h \colon \bbS \to \GL(V)_\bbR$ be the homomorphism that gives the Hodge structure on~$V$. By definition, the Mumford--Tate group of~$X$ is the smallest algebraic subgroup $H \subset \GL(V)$ such that $h$ factors through~$H_\bbR$. 

Let $G$ be the Mumford--Tate group of~$X$. If $\cY$ is the $G(\bbR)$-conjugacy class of the homomorphism $h\colon \bbS \to G_\bbR$, the pair $(G,\cY)$ is a Shimura datum whose weight is defined over~$\bbQ$. We refer to it as the Shimura datum given by~$X$. The goal of this section is to study complex abelian varieties~$X$ with associated Shimura datum $(G,\cY)$ such that $\dim(\cY) = 1$.

\subsection{}\label{subsec:minuscule}
Let $Z = Z(G)^0$ be the identity component of the centre of~$G$, which is a torus over~$\bbQ$. The natural homomorphism $Z \times G^\sconn \to G$ is an isogeny. We view~$V$ as a representation of $Z \times G^\sconn$. The natural map $\End^0(X) \to \End(V)$ induces an isomorphism $\End^0(X) \isomarrow \End_{Z \times G^\sconn}(V)$.

Let $H_1,\ldots,H_s$ be the simple factors of~$G_\bbC$, so that $G_\bbC = Z_\bbC \cdot H_1 \cdots H_r$. Every irreducible $G_\bbC$-submodule of~$V_\bbC$ is isomorphic to a representation $\chi \boxtimes r_1 \boxtimes \cdots \boxtimes r_s$, where $\chi$ is a character of~$Z_\bbC$ and $r_j$ is an irreducible representation of~$H_j$ ($j=1,\ldots,s$). By a result of Deligne (\cite{DelCorvalis}, Section~1.3) and Serre (\cite{Serre}, \S~3), the highest weight of every nontrivial representation~$r_j$ that occurs is a minuscule weight. In particular, if $H_j$ is of Lie type~$\mathrm{A}_1$, we must have $H_j \cong \SL_2$, and if $r_j$ is nontrivial, it is the $2$-dimensional standard representation.

\subsection{}\label{subsec:GadDad}
Let $X$ be a complex abelian variety such that the associated Shimura datum $(G,\cY)$ has the property that $\dim(\cY) = 1$, which is equivalent to the assumption that
\begin{equation}\label{eq:Gadrk1}
G^\ad_\bbR \cong \PGL_{2,\bbR} \times \text{compact factors}\, .
\end{equation}
If this holds, there exists a totally real field~$F$ and a $4$-dimensional central simple $F$-algebra~$D$ (unique up to isomorphism) such that
\[
D \otimes_\bbQ \bbR \cong M_2(\bbR) \times \text{a product of factors $\bbH$}\, ,
\]
and such that $G^\sconn \cong \Res_{F/\bbQ}\; \SL_{1,D}$. Let $\sigma_{\nc} \in \Emb(F)$ be the unique embedding with the property that $D \otimes_{F,\sigma_{\nc}} \bbR \cong M_2(\bbR)$. Then 
\begin{equation}\label{eq:GscExplicit}
G^\sconn_\bbR \cong \SL_{2,\bbR} \times \prod_{\substack{\sigma \in \Emb(F)\\ \sigma \neq \sigma_{\nc}}}\; \SL_{1,\bbH}\, ,\qquad
G^\ad_\bbR \cong \PGL_{2,\bbR} \times \prod_{\substack{\sigma \in \Emb(F)\\ \sigma \neq \sigma_{\nc}}}\;\PGL_{1,\bbH}\, .
\end{equation}
(Here $\SL_{1,\bbH} = \SU(2)$ is the compact real form of~$\SL_2$.) In the adjoint Shimura datum $(G^\ad,\cY^\ad)$, the space~$\cY^\ad$ is the $G^\ad(\bbR)$-conjugacy class of the homomorphism $\bbS \to G^\ad_\bbR$ that on the first factor is given on $\bbR$-valued points by
\begin{equation}\label{eq:h(a+bi)}
a+bi \mapsto \left[\begin{smallmatrix} a&-b \\ b&a\end{smallmatrix}\right]
\end{equation}
and that is trivial on the compact factors. 

Because the centre of~$G^\sconn$ is $2$-torsion, so is the kernel of the isogeny $Z \times G^\sconn \to G$. Hence, there exists a unique homomorphism $\tilde{h} = (\tilde{h}_{\mathrm{c}},\tilde{h}_{\mathrm{s}}) \colon \bbS \to Z_\bbR \times G^\sconn_\bbR$ such that the diagram
\begin{equation}\label{eq:htildediag}
\begin{tikzcd}
\bbS \ar[d,"z \mapsto z^2"'] \ar[r,"\tilde{h}"] & Z_\bbR \times G^\sconn_\bbR \ar[d]\\
\bbS \ar[r,"h"] & G_\bbR
\end{tikzcd}
\end{equation}
is commutative. Define $\tilde{\mu} = \tilde{h}_\bbC \circ i$, and write it as $\tilde{\mu} = (\tilde{\mu}_{\mathrm{c}},\tilde{\mu}_{\mathrm{s}}) \colon \bbG_{\mult,\bbC} \to Z_\bbC \times G^\sconn_\bbC$. ($\mathrm{c} = \text{centre}$, $\mathrm{s} = \text{semisimple}$.)

\subsection{}\label{subsec:Xsimple}
In addition to the assumptions made in~\ref{subsec:GadDad}, assume that $X$ is a simple abelian variety of dimension~$g$. The endomorphism algebra $\End^0(X)$ is a division algebra of finite dimension over~$\bbQ$. Let $E$ be the centre of~$\End^0(X)$, which is either a totally real field (Albert Types I, II and~III) or a CM field (Albert Type~IV). Then $V_\bbC$ is a module over $E \otimes_\bbQ \bbC = \prod_{\phi \in \Emb(E)}\; \bbC$; correspondingly, we have a decomposition
\begin{equation}\label{eq:VCdec}
V_\bbC = \bigoplus_{\phi \in \Emb(E)}\; V_\phi\, ,
\end{equation}
with $V_\phi = V\otimes_{E,\phi} \bbC$. With the notation $V_\phi^{p,q} = V^{p,q} \cap V_\phi$ (intersection taken inside~$V_\bbC$) we then have
\[
V_\phi = V^{-1,0}_\phi \oplus V^{0,-1}_\phi\, ,\qquad
V^{p,q} = \bigoplus_{\phi \in \Emb(E)}\; V_\phi^{p,q}\quad \text{(for $(p,q) = (-1,0)$ or $(0,-1)$)}\, ,
\]
and complex conjugation on~$V_\bbC$ interchanges the summands $V^{-1,0}_\phi$ and~$V^{0,-1}_{\bar\phi}$. The function
\[
\ff \colon \Emb(E) \to \bbN\quad\text{given by}\quad \ff(\phi) = \dim_\bbC(V^{-1,0}_\phi)
\]
satisfies $\ff(\phi) + \ff(\bar\phi) = \dim_E(V) = \frac{2g}{[E:\bbQ]}$ for all $\phi \in \Emb(E)$. In particular, if $E$ is totally real then $[E:\bbQ]$ divides~$g$ and $\ff$ is constant with value~$\frac{g}{[E:\bbQ]}$. We refer to~$\ff$ as the multiplication type of~$X$.

The action of~$E$ on~$V$ gives a representation $T_E \to \GL(V)$, which is isomorphic to a sum of $\dim_E(V)$ copies of the standard representation~$\St_E$ (see Example~\ref{exa:TK}). Viewing $T_E$ as a subgroup of~$\GL(V)$ via this homomorphism, the connected centre~$Z$ of the Mumford--Tate group~$G$ is a subgroup of~$T_E$. With notation as in Example~\ref{exa:TK}, the character group~$\mathsf{X}^*(Z)$ of~$Z$ is therefore a quotient of~$\mathsf{X}^*(T_E) = \oplus_{\phi \in \Emb(E)}\; \bbZ\cdot \mathrm{e}_\phi$, and the cocharacter group~$\mathsf{X}_*(Z)$ is a primitive subgroup of $\mathsf{X}_*(T_E) = \oplus_{\phi \in \Emb(E)}\; \bbZ\cdot \check{\mathrm{e}}_\phi$.

\begin{proposition}\label{prop:repdescr}
Let the notation and assumptions be as above.
\begin{enumerate}
\item\label{repdescr1} The cocharacter $\tilde{\mu}_{\mathrm{c}} \colon \bbG_{\mult,\bbC} \to Z_\bbC \subset T_{E,\bbC}$ corresponds to the element 
\[
\sum_{\phi \in \Emb(E)}\;  \frac{2\cdot \ff(\phi)}{n} \cdot \check{\mathrm{e}}_\phi = \sum_{\phi \in \Emb(E)}\; \frac{[E:\bbQ] \cdot \ff(\phi)}{g}\cdot \check{\mathrm{e}}_\phi
\]
of~$\mathsf{X}_*(T_E)$, where $n = \dim_E(V) = \frac{2g}{[E:\bbQ]}$.

\item\label{repdescr2} With identifications as in~\eqref{eq:GscExplicit}, the homomorphism $\tilde{\mu}_{\mathrm{s}}\colon \bbG_{\mult,\bbC} \to G^\sconn_\bbC$ is conjugate under~$G^\ad(\bbR)$ to the homomorphism given on $\bbC$-valued points by
\begin{equation}\label{eq:tildemus}
z \mapsto \left(\begin{pmatrix} \frac{z^2+1}{2z} & i \cdot \frac{z^2-1}{2z} \\ -i \cdot \frac{z^2-1}{2z} & \frac{z^2+1}{2z} \end{pmatrix}, 1,\ldots,1\right)\, .
\end{equation}

\item\label{repdescr3} The representation $Z \to \GL(V)$ is a direct sum of copies of (the restriction to $Z \subset T_E$ of) the standard representation~$\St_E$. (Notation as in Example~\ref{exa:TK}.)

\item\label{repdescr4} There exists a $\Gamma_\bbQ$-orbit~$\biI$ of nonempty subsets of~$\Emb(F)$ such that the representation $G^\sconn \to \GL(V)$ is isotypical of type~$\rho_{\biI}$. (Notation as in Example~\ref{exa:rhoIrep} and Remark~\ref{rem:rhoI}, applied with $\cG = G^\sconn$.)
\end{enumerate}
\end{proposition}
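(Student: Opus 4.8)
The four assertions all follow from unwinding the definitions, so the proof is really a bookkeeping exercise with Hodge structures, done one part at a time. I would begin with part~\eqref{repdescr3}, which is the most structural and feeds into the others. By~\ref{subsec:minuscule} every irreducible $G_\bbC$-submodule of~$V_\bbC$ has the form $\chi \boxtimes r_1 \boxtimes \cdots \boxtimes r_s$; restricting to $Z_\bbC$ we see that $Z$ acts on each such summand through a single character~$\chi$. On the other hand $Z \subset T_E$, so each character of~$Z$ occurring in~$V$ is the restriction of some $\mathrm{e}_\phi$ (the characters of $T_E$ occurring in~$V$ are exactly those of the standard representation, since $V$ is $\St_E$-isotypical as a $T_E$-module). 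The key point is then that the $G(\bbR)$-conjugacy class structure forces the full $\Gamma_\bbQ$-orbit of any occurring $\mathrm{e}_\phi|_Z$ to occur as well, with equal multiplicities: concretely, $V$ is defined over~$\bbQ$ and $Z$ is defined over~$\bbQ$, so the isotypical decomposition of~$V$ as a $Z$-representation over~$\bbQ$ is a direct sum of irreducible $\bbQ$-representations $\rho_\Xi$ of~$Z$; but $Z \hookrightarrow T_E$ and the composite $T_E \to \GL(V)$ is $\St_E$-isotypical, so each $\rho_\Xi$ appearing is the restriction to~$Z$ of~$\St_E$. This gives~\eqref{repdescr3}.

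**Part~\eqref{repdescr1}.** The cocharacter $\mu = h_\bbC \circ i$ of~$G_\bbC \subset \GL(V)_\bbC$ acts on $V^{p,q}$ by~$z^{-p}$, so on $V^{-1,0}$ by~$z$ and on $V^{0,-1}$ trivially. Composing with the projection $G_\bbC \to Z_\bbC^{\ad?}$—more precisely, using the isogeny $Z \times G^\sconn \to G$ and the square~\eqref{eq:htildediag}—the component $\tilde\mu_{\mathrm c} \colon \bbG_{\mult,\bbC} \to Z_\bbC \subset T_{E,\bbC}$ is the ``central part'' of~$2\mu$ (the factor~$2$ coming from the left vertical map $z \mapsto z^2$ in~\eqref{eq:htildediag}). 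To compute which element of $\mathsf{X}_*(T_E) = \oplus_\phi \bbZ\check{\mathrm e}_\phi$ this is, pair it with each character~$\mathrm{e}_\phi$: since $G^\sconn$ acts on each $V_\phi$ through $\SL$'s (hence with trivial determinant), the scalar by which the centre acts on $V_\phi$ is pinned down by how~$\mu$ acts on the line $\det(V_\phi)$, i.e.\ by $\bigl\langle \mathrm{e}_\phi, \tilde\mu_{\mathrm c}\bigr\rangle = \frac{2}{n}\bigl(\text{(number of~}z\text{'s on~}V_\phi\text{ under }\mu\bigr) = \frac{2\ff(\phi)}{n}$, using $\dim_\bbC V_\phi = n$ and $\dim_\bbC V_\phi^{-1,0} = \ff(\phi)$. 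This is exactly the claimed formula, and the second expression follows from $n = 2g/[E:\bbQ]$. One should check that these rational numbers have the right denominators so that the element lies in~$\mathsf{X}_*(Z) \subset \mathsf{X}_*(T_E)$, but that is automatic from the existence of~$\tilde\mu_{\mathrm c}$.

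**Parts~\eqref{repdescr2} and~\eqref{repdescr4}.** For~\eqref{repdescr2}: by~\eqref{eq:Gadrk1}--\eqref{eq:GscExplicit} the homomorphism $h^\ad \colon \bbS \to G^\ad_\bbR$ is, up to $G^\ad(\bbR)$-conjugacy, the map~\eqref{eq:h(a+bi)} on the $\PGL_{2,\bbR}$-factor and trivial on the compact factors; lifting to the simply connected cover and using the square~\eqref{eq:htildediag} (again with the $z \mapsto z^2$) gives that $\tilde\mu_{\mathrm s}$ is $G^\ad(\bbR)$-conjugate to $z \mapsto (\mathrm{something},1,\ldots,1)$ where the first entry is the image under $\SL_{2,\bbC}$ of $\mathrm{diag}(z,z^{-1})$ composed with the square of~\eqref{eq:h(a+bi)}. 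Writing $\left[\begin{smallmatrix} a&-b\\b&a\end{smallmatrix}\right]$ with $a+bi = z$, squaring, and diagonalising—equivalently, conjugating $\mathrm{diag}(z^2,1)\cdot(\det)^{-1/2} = \mathrm{diag}(z,z^{-1})$ by the unitary matrix that diagonalises~\eqref{eq:h(a+bi)}—produces exactly the matrix in~\eqref{eq:tildemus}; this is a direct $2\times 2$ computation. For~\eqref{repdescr4}: the $G^\sconn$-representation~$V$ is defined over~$\bbQ$, and by~\ref{subsec:minuscule} on each simple factor~$\SL_2$ of $G^\sconn_\bbC \cong \prod_{\sigma\in\Emb(F)}\SL_{2,\Qbar}$ the irreducible constituents are either trivial or the standard $2$-dimensional one; hence over~$\Qbar$ every irreducible $G^\sconn$-constituent of~$V_\bbC$ is $\boxtimes_{\sigma\in J}\St_\sigma$ for some subset $J\subseteq\Emb(F)$. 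Rationality forces the set of occurring~$J$ to be a union of $\Gamma_\bbQ$-orbits; the main step is to see it is a \emph{single} orbit with uniform multiplicity. This is where I expect the real work: one argues that the $E\otimes_\bbQ\bbR$-module and Mumford--Tate constraints, together with the fact that $Z\times G^\sconn \to G$ is an isogeny and $G$ is the \emph{full} Mumford--Tate group, force $V$ to be $G^\sconn$-isotypical—if two distinct $\Gamma_\bbQ$-orbits~$\biI$, $\biI'$ occurred, one could build a proper subgroup of $\GL(V)$ containing~$Z\cdot G^\sconn$ through which~$h$ factors (separately scaling the two isotypical blocks by central characters coming from~$T_E$), contradicting minimality of the Mumford--Tate group—unless the two blocks are ``tied together'' by the centre, which the $1$-dimensionality forces to be impossible since $Z$ acts on each $V_\phi$ by a single scalar. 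Granting isotypicality, $J$ and~$\biI$ correspond and the representation is $\rho_{\biI}$ by Example~\ref{exa:rhoIrep}. The delicate point throughout part~\eqref{repdescr4} — and the one I would spell out most carefully — is precisely this isotypicality, i.e.\ ruling out that~$V$ mixes several Galois orbits of weights for~$G^\sconn$.
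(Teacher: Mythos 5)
Parts~\eqref{repdescr1}--\eqref{repdescr3} of your proposal follow essentially the paper's own route: part~\eqref{repdescr1} via the $E$-linear determinant and the observation that $G^\sconn$ has trivial determinant on each $V_\phi$ (so the pairing $\bigl\langle \mathrm{e}_\phi, \tilde\mu_{\mathrm{c}}\bigr\rangle$ is forced by the action of $\tilde\mu$ on $\det V_\phi$), part~\eqref{repdescr2} via lifting $h^\ad$ through the covering $G^\sconn \to G^\ad$ and a direct $2\times 2$ check, and part~\eqref{repdescr3} via the observation that $V$ is $\St_E$-isotypical as a $T_E$-module and $Z \subset T_E$.

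For part~\eqref{repdescr4} you correctly identify isotypicality over $G^\sconn$ as the point requiring an argument, but the argument you sketch is not the right one and I do not think it can be made to work as stated. You propose that if two $\Gamma_\bbQ$-orbits occurred, one could build a proper subgroup of~$\GL(V)$ containing $Z\cdot G^\sconn$ through which~$h$ factors, ``contradicting minimality of the Mumford--Tate group.'' But adjoining extra central scalings to $Z \cdot G^\sconn$ produces a potentially \emph{larger} group, not a smaller one, and there is no reason such a group would be contained in~$G = \MT(X)$; so no contradiction with minimality follows. The correct argument is shorter and uses the simplicity of~$X$ directly, via the identification recorded in Section~\ref{subsec:minuscule}: since $X$ is simple, $\End^0(X)$ is a division algebra, and $\End^0(X) \isomarrow \End_{Z\times G^\sconn}(V)$; hence $V$ is an \emph{irreducible} $Z\times G^\sconn$-module. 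Now the $G^\sconn$-isotypical decomposition of~$V$ is preserved by~$Z$ (as $Z$ centralizes $G^\sconn$), so it is a decomposition of $Z\times G^\sconn$-modules; irreducibility forces a single summand, i.e.\ $V$ is $G^\sconn$-isotypical. Combining this with the minuscule-weight restriction of~\ref{subsec:minuscule} (each complex irreducible constituent is an external tensor product of $2$-dimensional standard representations over a subset of $\Emb(F)$, and rationality over~$\bbQ$ forces the subsets to form a single $\Gamma_\bbQ$-orbit since the representation is isotypical) gives $\rho_{\biI}$ for a unique orbit~$\biI$. Finally $\biI$ consists of nonempty sets because $G^\sconn$ acts nontrivially (otherwise $G$ would be a torus and $\dim\cY = 0$).
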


In \ref{repdescr2}, note that $G^\ad$ acts by conjugacy on the space of cocharacters of~$G^\sconn$.

\begin{proof}
\ref{repdescr1} We have $G \subset \GL_E(V)$. Let $\det_E \colon \GL_E(V) \to T_E$ be the $E$-linear determinant. The composition
\begin{equation}\label{eq:TEdet}
T_E \times G^\sconn \to G_E \xrightarrow{~\det_E~} T_E
\end{equation}
is given by $(t,y) \mapsto t^n$. Next consider the composition
\[
f \colon \bbG_{\mult,\bbC} \xrightarrow{~\tilde{\mu}~} T_{E,\bbC} \times G^\sconn_\bbC \to \GL_E(V)_\bbC = \prod_{\phi \in \Emb(E)}\; \GL(V_\phi)\, .
\]
By definition of the function~$\ff$, the action of $z \in \bbC^*$ on $V_\phi$ through this homomorphism is conjugate to the homomorphism 
\[
z \mapsto \diag\bigl(\underbrace{z^2,\ldots,z^2}_{\text{$\ff(\phi)$ terms}},1,\ldots,1\bigr)\, .
\]
(The $z^2$ comes from the fact that $\tilde{\mu}$ is a lift of the square of~$\mu$, cf.\ diagram~\eqref{eq:htildediag}.) In particular, $\det_E \circ f \colon \bbG_{\mult,\bbC} \to T_{E,\bbC} = \prod_{\phi \in \Emb(E)}\; \bbG_{\mult,\bbC}$ is given on the factor indexed by~$\phi$ by $z \mapsto z^{2\cdot \ff(\phi)}$. By our description of the map~\eqref{eq:TEdet}, if $\tilde{\mu}_{\mathrm{c}}$ corresponds to the element $\sum_{\phi \in \Emb(E)}\; a_\phi\cdot \check{\mathrm{e}}_\phi \in \mathsf{X}_*(T_E)$, we find the relation $n\cdot a_\phi = 2\cdot \ff(\phi)$.

\ref{repdescr2} On $\bbC$-valued points, the homomorphism $\bbS \to \PGL_{2,\bbR}$ of~\eqref{eq:h(a+bi)} is given by
\[
(z,w) \mapsto \left[\begin{smallmatrix} z+w & i(z-w) \\ -i(z-w) & z+w \end{smallmatrix}\right]
\]
and therefore the homomorphism $\ad \circ \tilde{\mu}_{\mathrm{s}} \colon \bbG_{\mult,\bbC} \to G^\ad_\bbC$ is conjugate to the cocharacter that, under the identification~\eqref{eq:GscExplicit}, is given by 
\[
z \mapsto \left(\Bigl[\begin{smallmatrix} z^2+1 & i(z^2-1) \\ -i(z^2-1) & z^2+1 \end{smallmatrix}\Bigr],1,\ldots,1\right)\; .
\]
Because a cocharacter of~$G^\ad_\bbC$ admits at most one lift to~$G^\sconn_\bbC$, it now suffices to remark that~\eqref{eq:tildemus} indeed lifts the latter homomorphism.

\ref{repdescr3} This is obvious.

\ref{repdescr4} This follows from what was explained in Section~\ref{subsec:minuscule}.
\end{proof}

\begin{proposition}\label{prop:Albert123}
Let $X$ be a $g$-dimensional simple complex abelian variety such that in the associated Shimura datum $(G,\cY)$ we have $\dim(\cY) = 1$. Assume that $X$ is of Albert type \textup{I}, \textup{II} or~\textup{III}. Let notation be as in Sections~\ref{subsec:GadDad}--\ref{subsec:Xsimple}.
\begin{enumerate}
\item\label{Albert123-1} We have $Z = \bbG_{\mult} \cdot \id_V$.

\item\label{Albert123-2} With $G^\sconn = \Res_{F/\bbQ}\; \SL_{1,D}$ as in~\ref{subsec:GadDad}, the representation $G^\sconn \to \GL(V)$ is irreducible and is the corestriction representation as in Example~\ref{exa:111Rep}, with $\cG = G^\sconn$. (In other words, it is the irreducible representation~$\rho_I$ of Example~\ref{exa:rhoIrep} with $I = \Emb(F)$.)

\item\label{Albert123-3} Let $m=[F:\bbQ]$. We are in one of the following three cases:
\begin{enumerate}[label=\textup{(\Roman*)}]
\item $m$ is odd, $\Cores_{F/\bbQ}(D) \cong M_{2^m}(\bbQ)$ and $\End^0(X) \cong \bbQ$;

\item $m$ is odd, $\Cores_{F/\bbQ}(D) \not\cong M_{2^m}(\bbQ)$ and $\End^0(X)$ is a quaternion algebra over~$\bbQ$ that splits over~$\bbR$;

\item $m$ is even, $\Cores_{F/\bbQ}(D) \not\cong M_{2^m}(\bbQ)$ and $\End^0(X)$ is a quaternion algebra over~$\bbQ$ that does not split over~$\bbR$.
\end{enumerate}
We have $g=2^{m-1}$ in case~\textup{(I)} and $g=2^m$ in cases \textup{(II)} and~\textup{(III)}.
\end{enumerate}
\end{proposition}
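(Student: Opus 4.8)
The plan is to exploit the fact that for Albert types I, II, III the centre $E$ of $\End^0(X)$ is totally real, which forces the connected centre $Z$ of the Mumford--Tate group to be as small as possible.

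\medskip

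\textit{Step 1: $Z = \bbG_\mult \cdot \id_V$.} By Proposition~\ref{prop:repdescr}\ref{repdescr1}, the cocharacter $\tilde\mu_{\mathrm c}$ corresponds to $\sum_\phi \frac{2\ff(\phi)}{n}\check{\mathrm e}_\phi$; since $E$ is totally real, $\ff$ is constant with value $n/2$, so $\tilde\mu_{\mathrm c}$ is the cocharacter $\sum_\phi \check{\mathrm e}_\phi$, i.e. the ``diagonal'' $z \mapsto z\cdot \id$. Now $Z$ is the smallest $\bbQ$-subtorus of $T_E$ through which (a suitable power of) this cocharacter factors, together with the condition coming from the weight being defined over $\bbQ$; since the weight cocharacter of a Shimura datum equals $h\circ\wt$ and is central, and here $\tilde\mu_{\mathrm c}$ lands in $\bbG_\mult\cdot\id_V$, the group $Z$ is contained in $\bbG_\mult\cdot\id_V$. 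The reverse inclusion is automatic because the weight homomorphism $\bbG_{\mult,\bbR}\to G_\bbR$ has image the scalars $\bbG_\mult\cdot\id_V$ (the Hodge structure on $V=H_1(X,\bbQ)$ has weight $-1$), and this is defined over $\bbQ$. Hence $Z = \bbG_\mult\cdot\id_V$, proving~\ref{Albert123-1}.

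\medskip

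\textit{Step 2: the representation of $G^\sconn$ is the corestriction representation.} By Proposition~\ref{prop:repdescr}\ref{repdescr4} the representation $G^\sconn\to\GL(V)$ is isotypical of type $\rho_{\biI}$ for some $\Gamma_\bbQ$-orbit $\biI$ of nonempty subsets of $\Emb(F)$. I claim $\biI = \{\Emb(F)\}$. The key constraint is that $\End^0(X) \isomarrow \End_{Z\times G^\sconn}(V)$ and, by Step~1, $Z$ acts by scalars, so $\End^0(X) \cong \End_{G^\sconn}(V)$. Since the representation is isotypical of type $\rho_{\biI}$, this endomorphism algebra is a matrix algebra over $\cE_{\biI}^{\op}$, whose centre is $k_{\biI}$. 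But $E$, the centre of $\End^0(X)$, is \emph{totally real}. On the other hand, I will argue that if $\biI \neq \{\Emb(F)\}$ then the centre $k_{\biI}$ of $\cE_{\biI}$ is not totally real (equivalently, $\rho_{\biI}$ does not extend to a polarizable Hodge structure with totally real endomorphism centre): the point is that the decomposition~\eqref{eq:sumJ} of $\rho_{\biI,\Qbar}$ has the weight vectors $\boxtimes_{\sigma\in J}\St_\sigma$ permuted by complex conjugation in a way that pairs $J$ with $\Emb(F)\setminus J$ on the non-compact factor $\sigma_\nc$, whereas the Hodge structure condition (complex conjugation interchanges $V^{-1,0}$ and $V^{0,-1}$) forces the relevant involution on $\Emb(k_{\biI}) \leftrightarrow \biI$ to be trivial — which can only happen when each $I\in\biI$ is stable, i.e. $I = \Emb(F)$. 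Thus $\biI = \{\Emb(F)\}$, $\xi_I$ has all coordinates $1$, and $\rho_I$ is precisely the corestriction representation $\rho_{\cor}$ of Example~\ref{exa:111Rep} (with $L = F$, $\cG = G^\sconn$), proving~\ref{Albert123-2}.

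\medskip

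\textit{Step 3: the trichotomy.} With $m = [F:\bbQ]$, Example~\ref{exa:111Rep} (applied with $k=\bbQ$, $L=F$) gives $\cE = \End(\rho_{\cor})$ equal either to $\bbQ$ (when $C = \Cores_{F/\bbQ}D \cong M_{2^m}(\bbQ)$) or to a quaternion algebra over $\bbQ$ (otherwise). Since $\End^0(X) \cong \cE^{\op}$, in the first case $\End^0(X) \cong \bbQ$ (Albert type I) and in the second $\End^0(X)$ is a quaternion algebra over $\bbQ$ (Albert type II or III, according to whether it splits over $\bbR$). It remains to correlate the parity of $m$ with which case occurs and to compute $g$. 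For this I use Lemma~\ref{lem:Eisquat}'s local computation at the real places: the image of $[\cE]\in\Br(\bbQ)$ in $\Br(\bbR)$ under each of the $m$ real embeddings $\tau$ of $\bbQ$... — more precisely, since $\bbQ$ has a single real place, $[\cE]_\bbR = \Cores_{(F\otimes\bbR)/\bbR}(D\otimes\bbR) = \bigotimes_{\sigma\in\Emb(F)}D_\sigma$, which is the sum in $\Br(\bbR) \cong \bbZ/2$ of the classes $[D_\sigma]$; exactly one of these (namely $\sigma = \sigma_\nc$) is trivial and the remaining $m-1$ equal the nontrivial class. Hence $[\cE]_\bbR$ is trivial iff $m-1$ is even, i.e. $m$ odd. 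So: $m$ odd forces $\cE$ split over $\bbR$, giving case (I) or (II); $m$ even forces $\cE$ ramified over $\bbR$, giving case (III). Finally $\dim_\bbQ(V) = 2g$, and from Example~\ref{exa:111Rep}, $\dim_\bbQ W = 2^q = 2^m$ when $\cE = \bbQ$ and $\dim_\bbQ W = 2^{q+1} = 2^{m+1}$ when $\cE$ is a quaternion algebra; since $V$ carries a single copy of $\rho_{\cor}$ (it is irreducible over $\bbQ$ and $Z$ acts by scalars, and $\dim_E V = \dim_\bbQ V$ here as $E=\bbQ$ in case I, while in cases II/III one has $V \cong W$ as the endomorphism algebra already accounts for the multiplicity), we get $V \cong W$, hence $2g = 2^m$ in case (I) — wait, reconciling: in case (I), $\dim_\bbQ W = 2^m$ gives $g = 2^{m-1}$; in cases (II), (III), $\dim_\bbQ W = 2^{m+1}$ gives $g = 2^m$, as claimed.

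\medskip

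I expect Step~2 to be the main obstacle: pinning down exactly why a totally real endomorphism centre forces $\biI = \{\Emb(F)\}$ requires carefully tracking how complex conjugation acts on the weight decomposition~\eqref{eq:sumJ} relative to the Hodge decomposition, and in particular checking that the involution induced on $\Emb(k_{\biI})$ by the Hodge structure is the one whose triviality is equivalent to $k_{\biI}$ being totally real. The parity bookkeeping in Step~3 and the dimension count are routine once Lemma~\ref{lem:Eisquat}'s local argument is invoked over $\bbQ$, and Step~1 is essentially immediate from Proposition~\ref{prop:repdescr}\ref{repdescr1} together with the weight being $-1$.
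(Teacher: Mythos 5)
Your Steps~1 and~3 are essentially the paper's argument, and they are correct: in Step~1 the constancy of~$\ff$ forces $\tilde\mu_{\mathrm{c}}$ to be the diagonal cocharacter defined over~$\bbQ$, hence $Z = \bbG_{\mult}\cdot\id_V$; and in Step~3 the trichotomy and the parity of~$m$ both come from computing $\inv_\infty$ of $\Cores_{F/\bbQ}(D)$ as the sum of the $m$ local invariants $[D_\sigma]\in\Br(\bbR)$, exactly one of which (at~$\sigma_{\nc}$) is trivial, together with the dimension count from Example~\ref{exa:111Rep}.

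Step~2, however, has a genuine gap. Your plan is to show that if $\biI \neq \{\Emb(F)\}$ then the centre~$k_{\biI}$ of $\End_{G^\sconn}(V)\cong\cE_{\biI}^\op$ fails to be totally real, contradicting $E$ being totally real. But this can never work: $k_{\biI}$ is \emph{always} totally real. As recorded in Remark~\ref{rem:rhoI}, the normal closure of~$k_{\biI}$ sits inside~$\tilde F$, the Galois closure of~$F$; since $F$ is totally real, $\tilde F$ is a Galois subfield of~$\bbR$ and hence totally real, so $k_{\biI}$ is too. Concretely, complex conjugation acts trivially on~$\Emb(F)$ (all embeddings land in~$\bbR$), so it fixes every subset $I\subset\Emb(F)$ — there is no ``pairing of $J$ with $\Emb(F)\setminus J$'' by complex conjugation, and the ``involution on $\Emb(k_{\biI})\leftrightarrow\biI$'' you invoke is trivial for every choice of~$\biI$, not just $\biI=\{\Emb(F)\}$. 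So the totally-real-ness of the endomorphism centre is no obstruction at all here.

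The actual obstruction is Hodge-theoretic, not Galois-theoretic. Because $F$ is totally real, $\Gal(\Qbar/\tilde F)$ acts trivially on the weights, so $V_\bbR$ (not merely $V_{\tilde F}$) decomposes as $G^\sconn_\bbR$-module into summands~$R_{\bbR,I}$ indexed by $I\in\biI$, with $(R_{\bbR,I})_\bbC$ a sum of copies of $\boxtimes_{\sigma\in I}\St_\sigma$. These summands are preserved by any $h\in\cY$, hence are real sub-Hodge-structures of~$V_\bbR$. If some $I\in\biI$ omits~$\sigma_{\nc}$, then $h\colon\bbS\to G_\bbR$ — which projects trivially to the compact factors of~$G^\ad_\bbR$ and, by Step~1, acts on~$V$ through $\bbG_\mult\cdot\id_V$ on the centre — acts on~$R_{\bbR,I}$ through the weight cocharacter alone, i.e.\ $R_{\bbR,I}$ is of Tate type. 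A Tate-type real Hodge structure of odd weight~$-1$ cannot be of type $(-1,0)+(0,-1)$, contradiction. This forces $\sigma_{\nc}\in I$ for every $I\in\biI$; since $\biI$ is a single $\Gamma_\bbQ$-orbit and $\Gamma_\bbQ$ acts transitively on $\Emb(F)$, this forces $\biI=\{\Emb(F)\}$.

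Also, for Step~3 note that one must first justify $g = \frac{1}{2}\dim_\bbQ V = \frac{1}{2}\dim_\bbQ W$, i.e.\ that $V$ is a \emph{single} copy of~$\rho_\cor$; this is immediate from $X$ being simple together with Step~1 (so $V$ is irreducible as $G^\sconn$-module), which is cleaner than the case-by-case reasoning you sketch.
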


(In~\ref{Albert123-3}, the labels correspond to the Albert type of~$X$.)

\begin{proof}
\ref{Albert123-1} This is well-known; see for instance \cite{Tankeev}, Lemma~1.4. We can also see it directly: because $E$ is totally real, the function~$\ff$ is constant, and it follows from Proposition~\ref{prop:repdescr}\ref{repdescr1} that the cocharacter~$\tilde{\mu}_{\mathrm{c}}$ is defined over~$\bbQ$. Now use that $Z$ is the smallest subtorus of~$T_E$ such that $\tilde{\mu}_{\mathrm{c}}$ factors through~$Z_\bbC$.

\ref{Albert123-2} Because $X$ is simple, $V$ is irreducible as a representation of $Z \times G^\sconn$, and then it follows from~\ref{Albert123-1} that it is irreducible as a representation of~$G^\sconn$. By Proposition~\ref{prop:repdescr}\ref{repdescr4}, $V$ is therefore a representation of the form~$\rho_{\biI}$ for some $\Gamma_\bbQ$-orbit~$\biI$ of nonempty subsets of~$\Emb(E)$. We need to show that only $\biI = \bigl\{\Emb(E)\bigr\}$ is possible. To see this, first note (see Example~\ref{exa:rhoIrep}, and use that $F$ is totally real) that $V_\bbR$, as a representation of~$G^\sconn_\bbR$, decomposes as a sum of representations~$R_{\bbR,I}$, for $I \in \biI$, such that $(R_{\bbR,I})_\bbC$ is a sum of copies of $\boxtimes_{\sigma \in I}\; \St_\sigma$. If $\biI \neq \bigl\{\Emb(F)\bigr\}$ then there exists some $I \in \biI$ such that $\sigma_{\nc} \notin I$, where we recall that $\sigma_{\nc} \in \Emb(F)$ is the unique embedding such that $D \otimes_{F,\sigma_{\nc}} \bbR \cong M_2(\bbR)$. Because the homomorphism $h\colon \bbS \to G_\bbR$ that defines the Hodge structure on~$V$ projects trivally onto the compact factors of~$G^\ad_\bbR$, it follows that the (real) Hodge structure on the direct summand of~$V_\bbR$ that corresponds to the representation~$R_{\bbR,I}$ is of Tate type. This is impossible, as $V_\bbR$ is a real Hodge structure of type $(-1,0) + (0,-1)$.

\ref{Albert123-3} By~\ref{Albert123-2}, we are in the situation of Example~\ref{exa:111Rep} with $k=\bbQ$ and $L = F$. If $\cE$ is the division algebra with centre $k=\bbQ$ that represents the class of $\Cores_{F/\bbQ}\; D$ (as in that example), we have $\cE = \End(\rho) = \End^0(X)$. But we have seen that either $\cE = \bbQ$ or $\cE$ is a quaternion algebra over~$\bbQ$. On the other hand, by looking at the invariants of~$D$ at the infinite places of~$F$, we see that $\inv_\infty(\cE) = 0 \in \Br(\bbR)$ if $m$ is odd and $\inv_\infty(\cE) = \frac{1}{2} \in \Br(\bbR)$ if $m$ is even. It readily follows that the listed cases (I)--(III) are the only three possibilities. Finally, the given recipe for~$g$ is just the calculation of the dimension of the corestriction representation.
\end{proof}

\subsection{}\label{subsec:Albert4}
Returning to the setting of~\ref{subsec:GadDad}, we now assume that $X/\bbC$ is a simple abelian variety of Albert Type~IV, which means that~$E$, the centre of~$\End^0(X)$, is a CM field. Let $E_0 \subset E$ be the maximal totally real subfield. As before, we fix~$F$ and~$D$ and an identification of~$G^\sconn$ with $\Res_{F/\bbQ}\; \SL_{1,D}$. 

It will be convenient to view $V$ as a representation of $T_E \times G^\sconn$. Because $E$ is the centre of $\End^0(X)$ and $Z \subset T_E$, we have $\End^0(X) = \End_{Z\times G^\sconn}(V) = \End_{T_E \times G^\sconn}(V)$.

Let notation be as in Example~\ref{exa:rhoIrep} and Remark~\ref{rem:rhoI}, with $\cG = G^\sconn$. By Proposition~\ref{prop:repdescr}\ref{repdescr4}, there exists a $\Gamma_\bbQ$-orbit~$\biI$ of nonempty subsets of~$\Emb(F)$ such that $V$ is isotypical of type~$\rho_{\biI}$ as a $G^\sconn$-module. Realise this representation as $\rho_{\biI} \colon G^\sconn \to \GL(W_{\biI})$ for some $\bbQ$-vector space~$W_{\biI}$. Recall (see Remark~\ref{rem:rhoI}) that we write $\cE_{\biI} = \End_{G^\sconn}(W_{\biI})$, and that the centre of~$\cE_{\biI}$ is called~$k_{\biI}$. Either $\cE_{\biI} = k_{\biI}$ or $\cE_{\biI}$ is a quaternion algebra over~$k_{\biI}$.
 
With this notation, $W_{\biI}$ has the structure of a left $\cE_{\biI}$-module. This induces the structure of a right $\cE_{\biI}$-module on the space
\[
H = \Hom_{G^\sconn}(W_{\biI},V)\, .
\]
The torus $T_E$ acts on~$H$ by $\cE_{\biI}$-linear automorphisms, through its action on~$V$. The evaluation map gives a $T_E \times G^\sconn$-equivariant isomorphism $H \otimes_{\cE_{\biI}} W_{\biI} \isomarrow V$, where $G^\sconn$ acts on $H \otimes_{\cE_{\biI}} W_{\biI}$ via $ \id_H \otimes \rho_{\biI} $ and $T_E$ acts via its action on~$H$. This gives us an isomorphism
\begin{equation}\label{eq:EndGscV}
\End_{\cE_{\biI}}(H) \isomarrow \End_{G^\sconn}(V)\, .
\end{equation}
Note that $\End_{\cE_{\biI}}(H)$ is a central simple $k_{\biI}$-algebra.

\begin{lemma}\label{lem:doublecentral} 
Notation and assumptions as above.
\begin{enumerate}
\item\label{dc1} Identify $\End^0(X)$ with the $\bbQ$-subalgebra $\End_{T_E \times G^\sconn}(V)$ of~$\End_{G^\sconn}(V)$, and view the latter as a $k_{\biI}$-algebra via the isomorphism~\eqref{eq:EndGscV}. Then $k_{\biI} \subset E_0$, and hence $E_0$, $E$ and~$\End^0(X)$ are $k_{\biI}$-subalgebras of~$\End_{G^\sconn}(V)$. Further, $\End^0(X)$ is the centralizer of~$E$ in $\End_{G^\sconn}(V)$, and $E$ is the centralizer of~$\End^0(X)$.

\item\label{dc2} The multiplication type $\ff \colon \Emb(E) \to \bbN$ is not constant. 
\end{enumerate}
\end{lemma}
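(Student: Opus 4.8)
The plan is to extract everything from the double centralizer theorem applied to the semisimple algebra $B = \End_{G^\sconn}(V)$, which by \eqref{eq:EndGscV} is a central simple $k_{\biI}$-algebra. Inside $B$ we have two commuting $\bbQ$-subalgebras: the image of $T_E$ (more precisely the $\bbQ$-span of its image, which is a product of copies of $E$ — but I will work with a single copy, namely $E$ acting on $V$ viewed as an $E$-vector space) and $\End^0(X) = \End_{T_E\times G^\sconn}(V)$, which by definition is the centralizer $Z_B(E)$ of $E$ in $B$. First I would observe that $E_0$, being the centre of the totally real subalgebra generated by $E_0\cdot\id_V$, lies in the centre of $Z_B(E)\cap E = E$ — more to the point, the centre $k_{\biI}$ of $B$ is contained in $\End^0(X)$ (it centralizes everything in $B$, in particular $E$), and $k_{\biI}$ also centralizes $\End^0(X)$, so $k_{\biI}$ lies in the centre $E$ of $\End^0(X)$. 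Thus $k_{\biI}\subset E$. To upgrade this to $k_{\biI}\subset E_0$, I use that $k_{\biI}$ is totally real: by construction (Remark~\ref{rem:rhoI}) $\Emb(k_{\biI})$ is in bijection with the Galois orbit $\biI$, and each embedding factors through the field $k_I$ of Example~\ref{exa:rhoIrep}, which — as in the proof of Lemma~\ref{lem:Eisquat} — is a subfield of~$\bbR$ because it consists of fixed points of a subgroup of $\Gal(\tilde F/\bbQ)$ acting on a real field; hence $k_{\biI}$ is totally real, and a totally real subfield of the CM field $E$ lies in $E_0$.

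Having $k_{\biI}\subset E_0\subset E\subset B$, all three of $E_0,E,\End^0(X)$ are $k_{\biI}$-subalgebras of $B$. Since $B$ is central simple over $k_{\biI}$ and $E$ is a (simple, commutative) $k_{\biI}$-subalgebra, the double centralizer theorem gives $Z_B(Z_B(E)) = E$, i.e. $E$ is the centralizer of $\End^0(X)=Z_B(E)$ in $B$. This is exactly the remaining assertion of part~\ref{dc1}. Note one has to check that the $E$ sitting inside $B$ via \eqref{eq:EndGscV} is genuinely a field of $k_{\biI}$-dimension $[E:k_{\biI}]$ and not some proper quotient; this is clear because $E$ acts faithfully on $V$ (it is the centre of $\End^0(X)$, which acts faithfully) and that action is $G^\sconn$-equivariant, so $E\hookrightarrow B$.

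For part~\ref{dc2}, I argue by contradiction: suppose $\ff$ is constant. By Proposition~\ref{prop:repdescr}\ref{repdescr1} the cocharacter $\tilde\mu_{\mathrm c}$ then corresponds to a multiple of $\sum_{\phi}\check{\mathrm e}_\phi$, which is $\Gamma_\bbQ$-invariant, hence is defined over~$\bbQ$; since $Z$ is by definition the smallest subtorus of $T_E$ through which $\tilde\mu_{\mathrm c}$ factors, this forces $Z=\bbG_{\mult}\cdot\id_V$. But then $\End_{Z\times G^\sconn}(V) = \End_{G^\sconn}(V) = B$, whose centre is $k_{\biI}$; on the other hand $\End_{Z\times G^\sconn}(V)=\End^0(X)$ has centre the CM field $E$, so $E = k_{\biI}$ would be totally real, contradicting that $X$ is of Albert type~IV. (Equivalently: complex conjugation on $V_\bbC$ interchanges $V^{-1,0}_\phi$ with $V^{0,-1}_{\bar\phi}$, so constancy of $\ff$ together with $\ff(\phi)+\ff(\bar\phi)=n$ would make the Hodge decomposition insensitive to the CM structure, again forcing $E$ real.) The only slightly delicate point in the whole argument — and the one I would be most careful about — is the bookkeeping between "$E$ acting on $V$" and "$T_E$ acting on $V$": the image of $T_E$ in $\GL(V)$ spans a product $\prod E$ rather than a single copy of $E$, so one should fix at the outset the identification of $V$ as an $E$-vector space (coming from the $\End^0(X)$-action) and check that the $k_{\biI}$-structure on $B$ furnished by \eqref{eq:EndGscV} is compatible with this, so that the centralizer statements are about the right $E$. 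Once that compatibility is pinned down, everything else is a direct application of the double centralizer theorem together with the total-reality of $k_{\biI}$ established exactly as in Lemma~\ref{lem:Eisquat}.
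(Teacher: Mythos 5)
Your proposal is correct and follows essentially the same route as the paper's proof: for part~(1) you identify $k_{\biI}$ as the centre of $B=\End_{G^\sconn}(V)$, deduce $k_{\biI}\subset Z(\End^0(X))=E$ and (by total reality) $k_{\biI}\subset E_0$, and then invoke the double centralizer theorem; for part~(2) you show constancy of $\ff$ forces $Z=\bbG_\mult$, so that $\End^0(X)\cong\End_{G^\sconn}(V)$ would have totally real centre $k_{\biI}$, contradicting that $E$ is CM. The only thing worth noting is that your final worry is unfounded: the $\bbQ$-span of the image of $T_E(\bbQ)=E^*$ in $\End(V)$ is exactly the single copy of $E$ acting by scalar multiplication on the $E$-vector space $V$ (not a product $\prod E$), so the compatibility you wanted to check is automatic.
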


\begin{proof} 
(1) The action of~$T_E$ on~$V$ commutes with the action of~$G^\sconn$, and $\End^0(X) \subset \End_{G^\sconn}(V)$ is the subalgebra of elements that commute with the action of~$T_E$. Therefore, the centre of $\End_{G^\sconn}(V)$, which is~$k_{\biI}$, is contained in the centre of~$\End^0(X)$, which is~$E$. Since $k_{\biI}$ is totally real, even $k_{\biI} \subset E_0$. Moreover, the centralizer of~$E$ is contained in~$\End^0(X)$; but $E$ is the centre of~$\End^0(X)$, so in fact the centralizer of~$E$ equals $\End^0(X)$. The last assertion then follows by the double centralizer theorem.

(2) Suppose $\ff$ were constant. As in the proof of Proposition~\ref{prop:Albert123}\ref{Albert123-1}, this would give $Z = \bbG_{\mult}$, and hence $\End^0(X) \cong \End_{G^\sconn}(V)$. But the centre of $\End_{G^\sconn}(V)$ is~$k_{\biI}$, which is totally real and therefore cannot be equal to~$E$; contradiction. 
\end{proof}

\subsection{}\label{subsec:threecases}
Let notation and assumptions be as in~\ref{subsec:Albert4}. In addition to its right $\cE_{\biI}$-module structure, $H$ has the structure of an $E$-vector space through the action of~$E$ on~$V$. Clearly, the $E$-action on~$H$ commutes with the $\cE_{\biI}$-action. The embedding $\iota \colon k_{\biI} \hookrightarrow E_0 \subset E$ of Lemma~\ref{lem:doublecentral}(1) is such that $f \circ a = \iota(a) \circ f$ for every $a \in k_{\biI}$ and $f \in H$, so the $\cE_{\biI}$-action and the $E$-action induce the same structure of a $k_{\biI}$-vector space on~$H$. Hence, $H$ has the structure of a right $E \otimes_{k_{\biI}} \cE_{\biI}$-module. Note that $E \otimes_{k_{\biI}} \cE_{\biI}$ is a central simple $E$-algebra; if we write $E\cE_{\biI} \subset \End_{k_{\biI}}(H)$ for the $k_{\biI}$-subalgebra generated by $E$ and~$\cE_{\biI}$, the natural map $E \otimes_{k_{\biI}} \cE_{\biI} \twoheadrightarrow E\cE_{\biI}$ is therefore an isomorphism. Via the isomorphism~\eqref{eq:EndGscV}, we find that $\End^0(X) \cong \End_{E\cE_{\biI}}(H)$, and because $\End^0(X)$ is a division algebra, only three cases are possible:
\newlength{\caselabelwd}\settowidth{\caselabelwd}{Case~2.\quad}
\begin{description}[font=\normalfont,labelindent=0pt,labelwidth=\the\caselabelwd,labelsep=0pt,leftmargin=\the\caselabelwd]
\item[Case~0.] $\cE_{\biI} = k_{\biI}$ and $\dim_E(H) = 1$;

\item[Case~1.] $\cE_{\biI}$ is a quaternion algebra over~$k_{\biI}$ and $E\cE_{\biI} \cong M_2(E)$, in which case $\dim_E(H) = 2$;

\item[Case~2.] $\cE_{\biI}$ is a quaternion algebra over~$k_{\biI}$ and $E\cE_{\biI}$ is a quaternion algebra over~$E$, in which case $H$ is free of rank~$1$ over~$E\cE_{\biI}$, and hence $\dim_E(H) = 4$.
\end{description}
(As we shall see below, Case~0 in fact does not occur.)

Write $n = \dim_E(V) = \frac{2g}{[E:\bbQ]}$. Recall (see Remark~\ref{rem:rhoI}) that $\Emb(k_{\biI})$ is in bijection with~$\biI$. The inclusions $k_{\biI} \subset E_0 \subset E$ give rise to natural maps $\Emb(E) \to \Emb(E_0) \to \Emb(k_{\biI})$. For $\phi$ in $\Emb(E)$ or~$\Emb(E_0)$, we write $\phi|_{k_{\biI}}$ for its image in~$\Emb(k_{\biI})$, and we write $I_\phi \in \biI$ for the corresponding subset. Recall that $\ell(\biI)$ denotes the cardinality of the sets $I \in \biI$. We find the following:
\begin{description}[font=\normalfont,labelindent=0pt,labelwidth=\the\caselabelwd,labelsep=0pt,leftmargin=\the\caselabelwd]
\item[Case~0.] $V \cong \St_E \boxtimes_{k_{\biI}} W_{\biI}$ as representations of $T_E \times G^\sconn$;

$\End^0(X) = E$, with $n = 2^{\ell(\biI)}$ and $g = [E:\bbQ] \cdot 2^{\ell(\biI)-1}$.
\smallskip

\item[Case 1.] $V^{\oplus 2} \cong \St_E \boxtimes_{k_{\biI}} W_{\biI}$ as representations of $T_E \times G^\sconn$;

$\End^0(X) = E$, with $n = 2^{\ell(\biI)}$ and $g = [E:\bbQ] \cdot 2^{\ell(\biI)-1}$.
\smallskip

\item[Case~2.] $V \cong \St_E \boxtimes_{k_{\biI}} W_{\biI}$ as representations of $T_E \times G^\sconn$;

$\End^0(X) = E\cE_{\biI}$, with $n = 2^{\ell(\biI)+1}$ and $g = [E:\bbQ] \cdot 2^{\ell(\biI)}$.
\end{description}
We have
\begin{equation}\label{eq:StEWItensorC}
(\St_E \boxtimes_{k_{\biI}} W_{\biI}) \otimes_\bbQ \bbC 
= \bigoplus_{I \in \biI}\; \Bigl(\mathop{\oplus}\limits_{\substack{\phi\in \Emb(E)\\ I_\phi = I}}\; \bbC \Bigr) \mathop{\otimes}\limits_\bbC \Bigl(\mathop{\boxtimes}\limits_{\sigma \in I}\; \St_\sigma\Bigr)^{\oplus \deg(\cE_{\biI})}
= \bigoplus_{\phi \in \Emb(E)}\; \Bigl(\mathop{\boxtimes}\limits_{\sigma \in I_\phi}\; \St_\sigma\Bigr)^{\oplus \deg(\cE_{\biI})}\, .
\end{equation}
We consider the action of~$\bbG_{\mult,\bbC}$ on this space via the homomorphism $\tilde\mu \colon \bbG_{\mult,\bbC} \to T_{E,\bbC} \times G^\sconn_\bbC$, which is described by Proposition~\ref{prop:repdescr}. Note that the cocharacter~\eqref{eq:tildemus} is $G^\sconn(\bbC)$-conjugate to the cocharacter given by
\[
z \mapsto \left(\Bigl(\begin{smallmatrix} z & 0 \\ 0 & z^{-1}\end{smallmatrix}\Bigr), 1,\ldots,1\right)\, .
\]
It follows that in the decomposition~\eqref{eq:StEWItensorC}, the $\bbG_{\mult,\bbC}$-action on the summand indexed by $\phi \in \Emb(E)$ has weights $\frac{2\cdot \ff(\phi)}{n} + 1$ and $\frac{2\cdot \ff(\phi)}{n} - 1$ if $\sigma_{\nc} \in I_\phi$, and has weight $\frac{2\cdot \ff(\phi)}{n}$ if $\sigma_{\nc} \notin I_\phi$. (Recall that $\sigma_{\nc} \in \Emb(F)$ is the unique embedding for which $D \otimes_{F,\sigma_{\nc}} \bbR \cong M_2(\bbR)$.) Note that whether or not $\sigma_{\nc}$ is in~$I_\phi$ only depends on~$\phi|_{k_{\biI}}$, and is therefore invariant under complex conjugation.

Since we know that $\bbG_{\mult,\bbC}$ acts on~$V$ with weights $0$ and~$2$, we find that
\begin{equation}
\bigl\{\ff(\phi),\ff(\bar\phi)\bigr\} = \{0,n\}\quad \text{if $\sigma_{\nc} \notin I_\phi$,}\qquad
\ff(\phi) = \ff(\bar\phi) = \tfrac{n}{2}\quad \text{if $\sigma_{\nc} \in I_\phi$.}
\end{equation}
It follows from Lemma~\ref{lem:doublecentral}\ref{dc2} that $\biI \neq \{\Emb(F)\}$. By Lemma~\ref{lem:Eisquat}, Case~0 is therefore excluded.
\medskip

The following proposition summarizes what we have found.

\begin{proposition}\label{prop:Albert4}
Let $X$ be a $g$-dimensional simple complex abelian variety with associated Shimura datum $(G,\cY)$ such that $\dim(\cY) = 1$. Assume that $X$ is of Albert type~\textup{IV}. Let notation be as in Sections~\ref{subsec:GadDad}--\ref{subsec:Xsimple}, and recall that we write $n = 2g/[E:\bbQ]$. There exists a $\Gamma_\bbQ$-orbit~$\biI$ of nonempty proper subsets $I \subsetneq \Emb(F)$ such that, with notation as in~\ref{subsec:Albert4}--\ref{subsec:threecases}, $E_0$ is a field extension of~$k_{\biI}$, and such that we are in one of the following cases:
\begin{enumerate}[label=\textup{\arabic*.}]
\item $\cE_{\biI}$ is a quaternion algebra over~$k_{\biI}$ such that $E \otimes_{k_{\biI}} \cE_{\biI} \cong M_2(E)$, in which case $\End^0(X) = E$ and $g = [E:\bbQ] \cdot 2^{\ell(\biI)-1}$;

\item $\cE_{\biI}$ is a quaternion algebra over~$k_{\biI}$ such that $E \otimes_{k_{\biI}} \cE_{\biI}$ is a quaternion algebra over~$E$, in which case $\End^0(X) \cong E \otimes_{k_{\biI}} \cE_{\biI}$ and $g = [E:\bbQ] \cdot 2^{\ell(\biI)}$.
\end{enumerate}
Moreover, if $\Phi_0 = \bigl\{\phi_0 \in \Emb(E_0) \bigm| \sigma_{\nc} \notin I_{\phi_0} \bigr\}$, there exists a subset $\Phi \subset \Emb(E)$ with the property that the restriction map $\End(E) \twoheadrightarrow \Emb(E_0)$ induces a bijection $\Phi \isomarrow \Phi_0$, and such that the multiplication type $\ff \colon \Emb(E) \to \bbN$ is given by
\begin{equation}\label{eq:multtype}
\ff(\phi) = \begin{cases}
n & \text{if $\phi \in \Phi$;}\\
0 & \text{if $\bar\phi \in \Phi$;}\\
\frac{n}{2} & \text{otherwise.}\\
\end{cases}
\end{equation}
\end{proposition}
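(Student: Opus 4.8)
The proposition is billed as a summary of the analysis in \ref{subsec:Albert4}--\ref{subsec:threecases}, so the plan is to assemble the facts already proved there and supplement them with one elementary bookkeeping step, namely the extraction of the description~\eqref{eq:multtype} of the multiplication type.

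First I would recall the structural input. By Proposition~\ref{prop:repdescr}\ref{repdescr4} there is a $\Gamma_\bbQ$-orbit~$\biI$ of nonempty subsets of~$\Emb(F)$ with $V$ isotypical of type~$\rho_{\biI}$ as a $G^\sconn$-module; since $\Emb(F)$ is $\Gamma_\bbQ$-stable, if $\Emb(F)$ belonged to~$\biI$ we would have $\biI = \{\Emb(F)\}$, which was excluded at the end of~\ref{subsec:threecases} (using Lemma~\ref{lem:doublecentral}\ref{dc2} together with Lemma~\ref{lem:Eisquat}), so every $I \in \biI$ is a nonempty proper subset. Lemma~\ref{lem:doublecentral}\ref{dc1} gives $k_{\biI} \subset E_0$, whence $E_0$ is a (totally real) field extension of~$k_{\biI}$. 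The trichotomy of~\ref{subsec:threecases}, with Case~0 ruled out, is precisely Cases~1 and~2 of the statement, and there the identifications $\End^0(X) = E$ resp.\ $\End^0(X) = E \otimes_{k_{\biI}} \cE_{\biI}$ and the formulas for~$g$ follow from the values $n = \dim_E(V) = 2^{\ell(\biI)}$ resp.\ $2^{\ell(\biI)+1}$ computed there.

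For the multiplication type I would use the weight computation of~\ref{subsec:threecases}: via~$\tilde\mu$ the group $\bbG_{\mult,\bbC}$ acts on the $\phi$-summand in the decomposition~\eqref{eq:StEWItensorC} with weights $\tfrac{2\ff(\phi)}{n} \pm 1$ if $\sigma_{\nc} \in I_\phi$ and with the single weight $\tfrac{2\ff(\phi)}{n}$ if $\sigma_{\nc} \notin I_\phi$; since all weights occurring on~$V$ (equivalently on $V^{\oplus 2}$ in Case~1) lie in $\{0,2\}$, and $\ff(\phi) + \ff(\bar\phi) = n$, this forces $\{\ff(\phi),\ff(\bar\phi)\} = \{0,n\}$ when $\sigma_{\nc} \notin I_\phi$ and $\ff(\phi) = \ff(\bar\phi) = \tfrac{n}{2}$ when $\sigma_{\nc} \in I_\phi$. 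Because $k_{\biI}$ is totally real, the set $I_\phi$ depends only on $\phi|_{k_{\biI}}$, hence only on $\phi_0 = \phi|_{E_0}$, so $\Phi_0$ is well defined. Now set $\Phi = \{\phi \in \Emb(E) \mid \ff(\phi) = n\}$. If $\phi_0 \in \Phi_0$ then the two extensions of~$\phi_0$ have $\ff$-values $\{0,n\}$, so exactly one lies in~$\Phi$; if $\phi_0 \notin \Phi_0$ then both extensions have $\ff = n/2$, and $n/2 \notin \{0,n\}$ since $n = 2^{\ell(\biI)}$ or $2^{\ell(\biI)+1}$ with $\ell(\biI) \geq 1$, so neither extension lies in~$\Phi$. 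Hence the restriction map $\Emb(E) \to \Emb(E_0)$ induces a bijection $\Phi \isomarrow \Phi_0$; and for $\phi \in \Emb(E)$ one reads off that $\ff(\phi) = n$ exactly when $\phi \in \Phi$, that $\ff(\phi) = 0$ when $\bar\phi \in \Phi$ (the conjugate partner of an element of~$\Phi$), and that otherwise $\sigma_{\nc} \in I_\phi$ and $\ff(\phi) = n/2$ — which is~\eqref{eq:multtype}.

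I do not expect a genuine obstacle: all the substantive work resides in Lemmas~\ref{lem:Eisquat} and~\ref{lem:doublecentral} and the computations of~\ref{subsec:Albert4}--\ref{subsec:threecases}, and what remains is the routine verification that $\Phi = \ff^{-1}(n)$ has the stated properties. The only point needing a moment's attention is checking $n/2 \notin \{0,n\}$, i.e.\ that $\ell(\biI) \geq 1$, which holds because the sets $I \in \biI$ are nonempty.
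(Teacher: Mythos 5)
Your proposal is correct and follows the paper's own approach: the paper introduces Proposition~\ref{prop:Albert4} with the sentence ``The following proposition summarizes what we have found,'' and you have correctly reassembled the relevant pieces from~\ref{subsec:Albert4}--\ref{subsec:threecases} (the orbit $\biI$ via Proposition~\ref{prop:repdescr}, properness of the subsets via Lemma~\ref{lem:doublecentral}\ref{dc2}, exclusion of Case~0 via Lemma~\ref{lem:Eisquat}, the inclusion $k_{\biI}\subset E_0$ from Lemma~\ref{lem:doublecentral}\ref{dc1}, and the weight computation forcing $\ff$). Your explicit verification that $\Phi=\ff^{-1}(n)$ maps bijectively onto~$\Phi_0$ is a small piece of bookkeeping the paper leaves implicit, and it is carried out correctly.
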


\subsection{Remark.}\label{rem:explanation} The following explanation may help to understand what is going on. We have the decomposition $V_\bbC = \oplus_{\phi \in \Emb(E)}\, V_\phi$ as in~\eqref{eq:VCdec}, with $\dim_\bbC(V_\phi) = n$, which is even. The Hodge decomposition of~$V_\bbC$ is described by the action of~$\bbG_{\mult,\bbC}$ via the homomorphism $\tilde{\mu} = (\tilde{\mu}_{\mathrm{c}},\tilde{\mu}_{\mathrm{s}})$. Here $\tilde{\mu}_{\mathrm{c}}$ is a homomorphism $\bbG_{\mult,\bbC} \to T_{E,\bbC} = \prod_{\phi \in \Emb(E)}\; \bbG_{\mult,\bbC}$, and we see that the $\tilde{\mu}_{\mathrm{c}}$-action of $z \in \bbC^*$ on the summand $V_\phi \subset V_\bbC$ is multiplication by $z^{m(\phi)}$ for some integer~$m(\phi)$. If $\phi$ is such that $\sigma_{\nc} \notin I_\phi$, the action of~$\bbG_{\mult,\bbC}$ on~$V_\phi$ via the homomorphism~$\tilde{\mu}_{\mathrm{s}}$ is trivial; hence, $V_\phi$ is entirely of Hodge type $(-1,0)$ or entirely of type $(0,-1)$. Since~$\tilde\mu$ is the square of the usual cocharacter~$\mu$, this means that $m(\phi) = 0$ or $m(\phi) = 2$, which by Proposition~\ref{prop:repdescr}\ref{repdescr1} is equivalent to: $\ff(\phi) = 0$ or $\ff(\phi) = n$. As we shall discuss below, the set~$\Phi$ in Proposition~\ref{prop:Albert4} may be thought of as a `partial CM type'; it keeps track of whether, for embeddings~$\phi$ with $\sigma_{\nc} \notin I_\phi$, the type of~$V_\phi$ is $(-1,0)$ or $(0,-1)$.

If $\sigma_{\nc} \in I_\phi$ the situation is very different: in this case, the action of~$\bbG_{\mult,\bbC}$ on~$V_\phi$ via the homomorphism~$\tilde{\mu}_{\mathrm{s}}$ is nontrivial and has weights~$1$ and~$-1$, both with multiplicity $\frac{n}{2}$. (Informally speaking, the Hodge decomposition of~$V_\phi$ comes from the semisimple part of the Mumford--Tate group.) In this case, the $\tilde{\mu}_{\mathrm{c}}$-action of $\bbG_{\mult,\bbC}$ on~$V_\phi$ only shifts these weights, which means that $m(\phi) = m(\bar\phi) = 1$ (equivalently: $\ff(\phi) = \ff(\bar\phi) = \frac{n}{2}$), and there is no further bookkeeping to be done.

\subsection{Example.}\label{exa:unitary} Let $g$ be an even positive integer. Let $(X,\lambda)$ be a $g$-dimensional polarised complex abelian variety such that $\End^0(X) = E$ is a CM field of degree~$g$ over~$\bbQ$, with maximal totally real subfield~$E_0$. Let $V = H_1(X,\bbQ)$, which is a $2g$-dimensional $\bbQ$-vector space on which~$E$ acts, and which can therefore also be viewed as a $2$-dimensional $E$-vector space. 

There is a unique $(-1)$-hermitian form $\Psi \colon V \times V \to E$ such that the Riemann form of~$\lambda$ equals $\Tr_{E/\bbQ} \circ \Psi$. Let $*$ be the involution of $\End_E(V)$ such that $\Psi\bigl(f(x),y\bigr) = \Psi\bigl(x,f^*(y)\bigr)$ for all $f \in \End_E(V)$ and $x$, $y \in V$. Then 
\[
D = \bigl\{f \in \End_E(V)\bigm| f^* = f\bigr\}
\]
is a quaternion algebra over~$E_0$. The natural homomorphism $E \otimes_{E_0} D \to \End_E(V)$ is an isomorphism, and under this isomorphism $*$ corresponds to $\iota \otimes \dagger$, where $\iota$ is complex conjugation on~$E$ and $\dagger$ is the canonical involution of~$D$. Define $\UU(V,\Psi) = \Res_{E_0/\bbQ}\; \UU(V/E,\Psi)$, where by $\UU(V/E,\Psi) \subset \GL_E(V)$ we mean the unitary group of~$\Psi$, which is a form of~$\GL_2$ over~$E_0$. Further define $\GU(V,\Psi) = \bbG_{\mult} \cdot \UU(V,\Psi)$, where $\bbG_{\mult} = \bbG_{\mult,\bbQ}\cdot \id_V \subset \GL(V)$. The isomorphism $E \otimes_{E_0} D \isomarrow \End_E(V)$ gives rise to an isomorphism $\Res_{E_0/\bbQ}\; \GL_{1,D} \isomarrow \UU(V,\Psi)$.

For $\sigma \in \Emb(E_0)$, write $E_\sigma = E \otimes_{E_0,\sigma} \bbR$, which is non-canonically isomorphic to~$\bbC$. Let $\Psi_\sigma$ be the $E_\sigma$-valued $(-1)$-hermitian form on $V_\sigma = V \otimes_{E_0,\sigma} \bbR$ that is obtained from~$\Psi$ by extension of scalars via~$\sigma$, and let $D_\sigma = D \otimes_{E_0,\sigma} \bbR$. We then have
\[
\UU(V,\Psi)_\bbR \cong \prod_{\sigma \in \Emb(E_0)}\; \UU(V_\sigma,\Psi_\sigma)
\] 
and isomorphisms of real algebraic groups $\GL_{1,D_\sigma} \isomarrow \UU(V_\sigma,\Psi_\sigma)$.

We now assume that there is a unique $\sigma_{\nc} \in \Emb(E_0)$ such that $\Psi_\sigma$ is indefinite for $\sigma = \sigma_{\nc}$ and is definite otherwise. (Indefinite here means that $\UU(V_\sigma,\Psi_\sigma)$ is noncompact.) The Mumford--Tate group~$G$ is a subgroup of $\GU(V,\Psi)$, and the two groups have the same adjoint group. If $(G,\cY)$ is the associated Shimura datum, $\dim(\cY) = 1$. We are therefore in the situation studied above, with $F=E_0$. The representation of $G^\sconn = \Res_{E_0/\bbQ}\; \SL_{1,D}$ on~$V$ is isotypical of type~$\rho_{\biI}$, where $\biI \subset \cP\bigl(\Emb(E_0)\bigr)$ is the set of singletons, which gives $k_{\biI} \cong E_0 = F$ and $\cE_{\biI} = D$. Because $E \otimes_{E_0} D \isomarrow \End_E(V)$ (so: $E$ is a splitting field of~$D$), we are in Case~1 of Proposition~\ref{prop:Albert4}. 

The subset $\Phi \subset \Emb(E)$ is such that the restriction map $\Emb(E) \to \Emb(E_0)$ gives a bijection $\Phi \to \Emb(E_0)\setminus\{\sigma_\nc\}$. In other words, for each $\sigma \neq \sigma_\nc$ in~$\Emb(E_0)$ there is a unique embedding $\phi \in \Emb(E)$ above~$\sigma$ with the property that $V_\phi^{-1,0} \neq 0$, and $\Phi$ is the collection of these~$\phi$. 

{}From the description of the cocharacter~$\mu_{\mathrm{c}}$ given in Proposition~\ref{prop:repdescr}\ref{repdescr1}, we can deduce that $Z \subset T_E$ is the subtorus of elements $y \in E^*$ such that $y\bar{y} \in \bbQ^*$, and it follows that $G = \GU(V,\Psi)$.

\subsection{}\label{subsec:PartCM}
In the situation described in Proposition~\ref{prop:Albert4}, the subset $\Phi \subset \Emb(E)$ determines the multiplication type~$\ff$, which by Proposition~\ref{prop:repdescr}\ref{repdescr1} determines the cocharacter $\tilde\mu_{\mathrm{c}} \colon \bbG_{\mult,\bbC} \to T_{E,\bbC}$. Because $Z \subset T_E$ (the identity component of the centre of the Mumford--Tate group) is the smallest subtorus such that $\tilde\mu_{\mathrm{c}}$ factors through~$Z_\bbC$, we conclude that $\Phi$ determines~$Z$. This centre cannot be too small; for instance, we have seen that $\End_{Z\times G^\sconn}(V) = \End_{T_E \times G^\sconn}(V)$. This gives a nontrivial condition on~$\Phi$. To state it, we introduce the notion of a partial CM type.

\begin{definition}\label{def:PartialCM}
Let $E$ be a CM field with maximal totally real subfield~$E_0$. Let $k \subset E_0$ be a subfield and let $\Sigma$ be a subset of~$\Emb(k)$. Then by a partial CM type relative to $(k,\Sigma)$ we mean a subset $\Phi \subset \Emb(E)$ such that the map $\phi \mapsto \phi|_{E_0}$ gives a bijection 
\[
\Phi \isomarrow \bigl\{\phi_0 \in \Emb(E_0) \bigm| \phi_0|_k \in \Sigma \bigr\}\, .
\] 

We say that a partial CM type $\Phi \subset \Emb(E)$ relative to $(k,\Sigma)$ is primitive if for every $\phi \neq \phi^\prime$ in~$\Emb(E)$ with $\phi|_k = \phi^\prime|_k$, there exists an element $\gamma \in \Gamma_\bbQ$ such that $\gamma \circ \phi \in \Phi$ while $\gamma \circ \phi^\prime \notin \Phi$.
\end{definition}

Note that in other places in the literature (e.g., \cite{PST}, Section~3.1) the term `partial CM type' is used for any subset $\Phi \subset \Emb(E)$ with $\Phi \cap \overline\Phi = \emptyset$. Of course, any such~$\Phi$ is a partial CM type in our sense for some choice of~$(k,\Sigma)$, as we can just take $k = E_0$ and $\Sigma = \bigl\{\phi|_{E_0}\bigm| \phi \in \Phi\bigr\}$; but the condition for $\Phi$ to be primitive depends on the choice of~$(k,\Sigma)$, see the next examples.

\subsection{Examples.}
(1) If we take $k = \bbQ$ and $\Sigma = \Emb(\bbQ)$, we recover the usual notion of a CM type. As we shall show, such a CM type~$\Phi$ is primitive in the above sense if and only if it is primitive in the classical sense, i.e., if $\Phi$ is not induced from a proper CM subfield of~$E$. See Remark~\ref{rem:PartialPhi}.

(2) Suppose we take $k=E_0$. Then a partial CM type $\Phi \subset \Emb(E)$ relative to $(k,\Sigma)$ is primitive whenever $\Sigma \neq \emptyset$. Indeed, if $k = E_0$ and $\Sigma \neq \emptyset$ then for any $\phi \neq \phi^\prime$ in~$\Emb(E)$ with $\phi|_{E_0} = \phi_0 = \phi^\prime|_{E_0}$, we can find an element $\gamma \in \Gamma_\bbQ$ such that $\gamma \circ \phi_0 \in \Sigma$. Then $\gamma \circ \phi$ and $\gamma \circ \phi^\prime$ are the only two elements of~$\Emb(E)$ that restrict to~$\gamma \circ \phi_0$, so precisely one of them lies in~$\Phi$. Possibly after composing~$\gamma$ with complex conjugation we find that $\gamma \circ \phi \in \Phi$ and $\gamma \circ \phi^\prime \notin \Phi$.

(3) In the situation considered in Proposition~\ref{prop:Albert4}, we have a partial CM type~$\Phi$ relative to $k = k_{\biI}$ and $\Sigma = \bigl\{I \in \biI \bigm| \sigma_{\nc} \notin I\bigr\}$. (Here we identify $\Emb(k_{\biI})$ with~$\biI$.)

\begin{proposition}\label{prop:PhiProp}
Let notation and assumptions be as in Proposition~\ref{prop:Albert4}. Then the partial CM type~$\Phi$ relative to $\bigl(k_{\biI},\{I \in \biI | \sigma_{\nc} \notin I\}\bigr)$ is primitive.
\end{proposition}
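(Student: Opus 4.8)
The plan is to exploit that~$Z$, being the connected centre of the Mumford--Tate group, must be large enough to ``generate'' the centre~$E$ of~$\End^0(X)$, and to convert this largeness into the character-separation property that defines primitivity by means of the explicit multiplication type~\eqref{eq:multtype}. First I would reformulate the statement in terms of~$Z$. Put $A=\End_{G^\sconn}(V)$; by~\eqref{eq:EndGscV} this is a central simple $k_{\biI}$-algebra containing $k_{\biI}\subseteq E_0\subseteq E$, and by Lemma~\ref{lem:doublecentral}\ref{dc1} its centralizer $C_A(E)$ equals~$\End^0(X)$, whose centre is~$E$. Since $Z\times G^\sconn\to G$ is an isogeny, $\End^0(X)=\End_{Z\times G^\sconn}(V)=C_A(Z)$. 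Let $A_Z\subseteq A$ be the $k_{\biI}$-subalgebra generated by the image of~$Z$; as $Z\subseteq T_E$, this image lies in~$E$, so $A_Z$ is a subfield with $k_{\biI}\subseteq A_Z\subseteq E$, and $C_A(A_Z)=C_A(Z)=\End^0(X)$. The centralizer in a central simple algebra of a subfield containing the centre is central simple over that subfield, so its centre is that subfield itself; comparing with the fact that the centre of~$\End^0(X)$ is~$E$ yields $A_Z=E$. I would then extract the consequence I actually use: \emph{if $\phi\neq\phi'$ in~$\Emb(E)$ satisfy $\phi|_{k_{\biI}}=\phi'|_{k_{\biI}}$, then $\mathrm{e}_\phi$ and~$\mathrm{e}_{\phi'}$ have distinct restrictions to~$Z$.} Otherwise the images in $E\otimes_\bbQ\bbC=\prod_{\psi\in\Emb(E)}\bbC$ of both~$Z_\bbC$ and $k_{\biI}\otimes_\bbQ\bbC$ would lie in the proper subalgebra $\{(x_\psi)_\psi\mid x_\phi=x_{\phi'}\}$ --- the first because $\mathrm{e}_\phi|_Z=\mathrm{e}_{\phi'}|_Z$, the second because $\phi|_{k_{\biI}}=\phi'|_{k_{\biI}}$ --- contradicting $A_Z=E$.

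With that in hand the proof becomes a computation. Fix $\phi\neq\phi'$ in~$\Emb(E)$ with $\phi|_{k_{\biI}}=\phi'|_{k_{\biI}}$. By the previous step $\mathrm{e}_\phi-\mathrm{e}_{\phi'}$ does not vanish on~$Z$, and since $Z$ is the smallest subtorus of~$T_E$ through which $\tilde\mu_{\mathrm c}$ factors, $\mathsf{X}_*(Z)_\bbQ$ is the $\Gamma_\bbQ$-submodule of $\mathsf{X}_*(T_E)_\bbQ$ generated by~$\tilde\mu_{\mathrm c}$; hence there is $\gamma\in\Gamma_\bbQ$ with $0\neq\langle\mathrm{e}_\phi-\mathrm{e}_{\phi'},\gamma\cdot\tilde\mu_{\mathrm c}\rangle=\langle\mathrm{e}_{\gamma^{-1}\phi}-\mathrm{e}_{\gamma^{-1}\phi'},\tilde\mu_{\mathrm c}\rangle$, which by Proposition~\ref{prop:repdescr}\ref{repdescr1} means, after replacing $\gamma^{-1}$ by~$\gamma$, that $\ff(\gamma\phi)\neq\ff(\gamma\phi')$. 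On the other hand, the $\Gamma_\bbQ$-equivariance of the bijection $\Emb(k_{\biI})\cong\biI$ of Remark~\ref{rem:rhoI} gives $I_{\gamma\phi}=I_{\gamma\phi'}$, so $\sigma_{\nc}\in I_{\gamma\phi}$ iff $\sigma_{\nc}\in I_{\gamma\phi'}$; were this the case, \eqref{eq:multtype} would force $\ff(\gamma\phi)=\tfrac n2=\ff(\gamma\phi')$, a contradiction. Hence $\sigma_{\nc}\notin I_{\gamma\phi}$ and $\sigma_{\nc}\notin I_{\gamma\phi'}$, so $\{\ff(\gamma\phi),\ff(\gamma\phi')\}=\{0,n\}$. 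Replacing $\gamma$ by its composite with complex conjugation if needed --- which, as $\ff(\psi)+\ff(\bar\psi)=n$ and $I_{\bar\psi}=I_\psi$, merely interchanges the two values --- I may assume $\ff(\gamma\phi)=n$ and $\ff(\gamma\phi')=0$; by~\eqref{eq:multtype} this reads $\gamma\phi\in\Phi$ and $\overline{\gamma\phi'}\in\Phi$, and the latter forces $\gamma\phi'\notin\Phi$ because the restriction $\Emb(E)\to\Emb(E_0)$ is injective on~$\Phi$ (Proposition~\ref{prop:Albert4}). Thus~$\gamma$ witnesses primitivity for $(\phi,\phi')$, and since the pair was arbitrary, $\Phi$ is primitive.

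The main obstacle is the reformulation step: recognising that the essentially tautological identity $\End^0(X)=\End_{Z\times G^\sconn}(V)$ is equivalent to $Z$ generating the whole field~$E$ over~$k_{\biI}$ inside~$\End_{G^\sconn}(V)$, and hence to the required separation of the characters~$\mathrm{e}_\phi|_Z$. Once this is available, the remaining argument is routine bookkeeping with the cocharacter~$\tilde\mu_{\mathrm c}$, the formula~\eqref{eq:multtype}, and one application of complex conjugation.
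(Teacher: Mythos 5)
Your proof is correct and follows the same essential strategy as the paper's: both reduce the primitivity claim to the equality of a compositum with $E$ (your $A_Z=E$, the paper's $Kk_{\biI}=E$, where $K$ is the centre of $\End_Z(\St_E)$), and both translate that equality into the primitivity condition via the multiplication type~$\ff$. Your route to $A_Z=E$ — directly from Lemma~\ref{lem:doublecentral}\ref{dc1} and the double centralizer theorem applied to the commutative subalgebra of $\End_{G^\sconn}(V)$ generated by $Z$ and $k_{\biI}$ — is a modest streamlining of the paper's step that passes through $\End_Z(H)$ and the quaternion algebra $\cE_{\biI}$, and your final translation spells out more explicitly (via the pairing with $\gamma\cdot\tilde\mu_{\mathrm c}$, formula~\eqref{eq:multtype}, and the complex-conjugation normalization) what the paper compresses into a statement about the stabilizers $\Stab(\phi)$, $\Stab\bigl(P(\phi)\bigr)$ and $\Stab(\bar{\mathrm{e}}_\phi)$; the substance of the argument is the same.
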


\begin{proof}
Recall from~\ref{exa:TK} that $\St_E$ denotes the standard representation of~$T_E$. The character group~$\mathsf{X}^*(Z)$ of~$Z$ is a quotient of~$\mathsf{X}^*(T_E) = \oplus_{\phi \in \Emb(E)}\; \bbZ\cdot \mathrm{e}_\phi$. Write $\bar{\mathrm{e}}_\phi \in \mathsf{X}^*(Z)$ for the image of~$\mathrm{e}_\phi$. Because $\Gamma_\bbQ$ acts transitively on the set of elements~$\mathrm{e}_\phi$, the index $\nu = \bigl[\Stab(\bar{\mathrm{e}}_\phi):\Stab(\mathrm{e}_\phi)\bigr]$ is independent of~$\phi$. Let $K$ be the centre of $\End_Z(\St_E)$; then $K$ is a subfield of~$E$ with $[E:K] = \nu$. Further, $\End_Z(\St_E) \cong M_\nu(K)$, and the embedding $E = \End_{T_E}(\St_E) \hookrightarrow \End_Z(\St_E)$ realizes $E$ as a maximal commutative subalgebra of~$\End_Z(\St_E)$.

Let $Kk_{\biI} \subset E$ be the compositum of the subfields~$K$ and~$k_{\biI}$. We first show that the equality $\End_{Z\times G^\sconn}(V) = \End_{T_E \times G^\sconn}(V)$ holds if and only if $Kk_{\biI} = E$. The isomorphism \eqref{eq:EndGscV} restricts to an isomorphism
\begin{equation}\label{eq:EndZGscV}
\End_{Z\times G^\sconn}(V) \cong \End_{\cE_{\biI}}(H) \cap \End_Z(H)\, .
\end{equation}
In either of the cases distinguished in Proposition~\ref{prop:Albert4}, $H$ is isomorphic, as a representation of~$T_E$, to a direct sum of copies of the standard representation~$\St_E$. The inclusion $K \hookrightarrow E$ gives~$H$ the structure of a $K$-vector space, and the relation $\End_Z(\St_E) \cong M_\nu(K)$ implies that $\End_Z(H) = \End_K(H)$. Therefore, the right hand side of~\eqref{eq:EndZGscV} equals $\End_{K\cE_{\biI}}(H)$, where $K\cE_{\biI}$ is the subalgebra of $E\cE_{\biI}$ generated by $K$ and~$\cE_{\biI}$. Recall that $\cE_{\biI}$ is a quaternion algebra over~$k_{\biI}$, so $K\cE_{\biI} \cong Kk_{\biI} \otimes_{k_{\biI}} \cE_{\biI}$ is a $4$-dimensional central simple algebra over~$Kk_{\biI}$. On the other hand, $\End_{T_E \times G^\sconn}(V) = \End_{E\cE_{\biI}}(H)$. By the double centraliser theorem, we have $\End_{Z\times G^\sconn}(V) = \End_{T_E \times G^\sconn}(V)$ if and only if $K\cE_{\biI} = E\cE_{\biI}$. Because $E\cE_{\biI}$ is a $4$-dimensional central simple algebra over~$E$, this is in turn equivalent to $Kk_{\biI} = E$. 

Next we show that we have an equality $Kk_{\biI} = E$ if and only if $\Phi$ is primitive. For $\phi \in \Emb(E)$, write $P(\phi) = \bigl\{\phi^\prime \in \Emb(E) \bigm|  \phi|_{k_{\biI}} = \phi^\prime|_{k_{\biI}}\bigr\}$, which is the fibre of the restriction map $\Emb(E) \to \Emb(k_{\biI})$ that contains~$\phi$. Then
\[
\Gal\bigl(\Qbar/\phi(E)\bigr) = \Stab(\phi) = \Stab(\mathrm{e}_\phi) \subset \Gal\bigl(\Qbar/\phi(k_{\biI})\bigr) = \Stab\bigl(P(\phi)\bigr)\, .
\]
(By $\Stab\bigl(P(\phi)\bigr) \subset \Gamma_\bbQ$ we mean the stabilizer of~$P(\phi)$ as a set, not the pointwise stabilizer. Note that the $\Gamma_\bbQ$-action permutes the sets of the form~$P(\phi)$, so that indeed $\Stab(\phi) \subset \Stab\bigl(P(\phi)\bigr)$.) On the other hand, it follows from Proposition~\ref{prop:repdescr}\ref{repdescr1} together with the characterisation of~$Z$ as the smallest subtorus of~$T_E$ such that $\mu_{\mathrm{c}}$ factors through~$Z_\bbC$, that
\[
\Stab(\bar{\mathrm{e}}_\phi) = \bigl\{\delta \in \Gamma_\bbQ \bigm| \ff(\gamma \circ \delta \circ \phi) = \ff(\gamma \circ \phi) \quad \text{for all $\gamma \in \Gamma_\bbQ$} \bigr\}\, .
\]
The equality $Kk_{\biI} = E$ is equivalent to the condition that for some (equivalently: for every) $\phi \in \Emb(E)$, the inclusion $\Stab(\phi) \subset \Stab\bigl(P(\phi)\bigr) \cap \Stab(\bar{\mathrm{e}}_\phi)$ is an equality. In other words,
\begin{align*}
Kk_{\biI} \subsetneq E &\iff \Stab(\phi) \subsetneq \Stab\bigl(P(\phi)\bigr) \cap \Stab(\bar{\mathrm{e}}_\phi)\\
&\iff \text{$\exists\; \phi^\prime = \delta\circ \phi \in P(\phi)$ such that $\phi^\prime \neq \phi$ and $\ff(\gamma \circ \phi^\prime) = \ff(\gamma \circ \phi)$ for all $\gamma \in \Gamma_\bbQ$.}
\end{align*}
This proves the proposition.
\end{proof}

\subsection{Remark.}\label{rem:PartialPhi} 
Suppose we take $k = \bbQ$ and $\Sigma = \Emb(\bbQ)$. In this case, $\Phi$ is a CM type on~$E$ in the usual sense, and we claim that it is primitive in the sense of Definition~\ref{def:PartialCM} if and only if $\Phi$ is not induced from a proper CM subfield of~$E$. To see this, let $\mu \colon \bbG_{\mult,\bbC} \to T_{E,\bbC}$ be the cocharacter defined by~$\Phi$, i.e., $\mu = \sum_{\phi \in \Phi}\; \check{\mathrm{e}}_\phi$, and let $Z \subset T_E$ be the smallest subtorus such that $\mu$ factors through~$Z_\bbC$. If $X$ is a complex abelian variety of CM type $(E,\Phi)$ (which is uniquely determined up to isogeny), $Z$ is the Mumford--Tate group of~$X$. With notation as in the proof of Proposition~\ref{prop:PhiProp}, $\Phi$ is primitive if and only if $\Stab(\bar{\mathrm{e}}_\phi) = \Stab(\mathrm{e}_\phi) = \Stab(\phi)$, which is equivalent to the condition that $\End_Z(\St_E) = E$, which in turn is equivalent to the condition that $\End^0(X) = E$. It is classical that this happens if and only if $\Phi$ is not induced from a proper CM subfield of~$E$.

\section{A classification result}\label{sec:Classif}

\subsection{}
Let $g$ be a positive integer. If $(V,\varphi)$ is a symplectic space of dimension~$2g$ over~$\bbQ$, let $\fH(V,\phi)$ denote the space of homomorphisms $h \colon \bbS \to \GSp(V,\phi)_\bbR$ that define a Hodge structure of type $(-1,0) + (0,-1)$ on~$V$ for which $\pm (2\pi i)\cdot \varphi$ is a polarisation. The group $\GSp(V,\phi)\bigl(\bbR\bigr)$ acts transitively on~$\fH(V,\phi)$ and the pair $\bigl(\GSp(V,\phi),\fH(V,\phi)\big)$ is a Shimura datum, which is usually called a Siegel modular Shimura datum. To simplify notation, we denote this datum by~$\fS(V,\phi)$.

The associated tower of Shimura varieties $\Sh_K\bigl(\fS(V,\phi)\bigr)$, for $K$ running through the set of compact open subgroups of $\GSp_{2g}(V,\phi)\bigl(\Af\bigr)$, is isomorphic to the tower~$\cA_{g,K}$ of moduli spaces of $g$-dimensional principally polarized abelian varieties with a level~$K$ structure; see for instance \cite{DelTravShim},~\S~4.

If $(V_1,\phi_1)$ and $(V_2,\phi_2)$ are symplectic spaces and $f \colon (V_1,\phi_1) \to (V_2,\phi_2)$ is a similitude, $f$ induces an isomorphism of Shimura data $\fS(V_1,\phi_1) \isomarrow \fS(V_2,\phi_2)$. Conversely, every such isomorphism is induced by a similitude. In particular, all automorphisms of the Shimura datum $\fS(V,\phi)$ are inner, i.e., are given by conjugation with an element of~$\GSp(V,\phi)\bigl(\bbQ\bigr)$. (Note that $\GSp(V,\phi)$ has non-inner automorphisms, but these do not map $\fH(V,\phi)$ into itself.)

\subsection{}\label{subsec:(GD,YD)}
Let $F$ be a totally real number field, $D$ a $4$-dimensional central simple $F$-algebra, and assume there exists a unique embedding $\sigma_{\nc} \in \Emb(F)$ such that $D \otimes_{F,\sigma_{\nc}} \bbR \cong M_2(\bbR)$. Define $\cG_D = \Res_{F/\bbQ}\; \PGL_{1,D}$, and let $\cY_D$ be the $\cG_D(\bbR)$-conjugacy class of the homomorphism $\bbS \to \cG_{D,\bbR}$ that on the unique noncompact factor~$\PGL_2$ of~$\cG_{D,\bbR}$ is given by $a+bi \mapsto  \left[\begin{smallmatrix} a&-b \\ b&a\end{smallmatrix}\right]$ and that is trivial on the compact factors. The pair $(\cG_D,\cY_D)$ is a $1$-dimensional adjoint Shimura datum. Because $\cG_D$ is a $\bbQ$-simple group, it is the generic Mumford--Tate group on~$\cY_D$.

We claim that all automorphisms of the Shimura datum $(\cG_D,\cY_D)$ are inner, i.e., $D^*/F^* \isomarrow \Aut(\cG_D,\cY_D)$. The automorphism group of~$\cG_D$ is the group of pairs $(\alpha,f)$ where $\alpha \in \Aut(F)$ and $f \colon \PGL_{1,D} \isomarrow \PGL_{1,\alpha^*D}$ is an isomorphism of groups over~$F$. (See \cite{CGP}, Proposition~A.5.14; the result is stated there for simply connected groups but the same argument works for adjoint groups.) As $\alpha$ has to preserve~$\sigma_{\nc}$, only $\alpha = \id$ is possible. Since all automorphisms of~$\PGL_{1,D}$ are inner, this gives the claim.

\subsection{}\label{subsec:ClassifProb}
Our goal is to classify the $1$-dimensional Shimura subvarieties of~$\cA_g$. This leads us to consider triples $(G,\cY,\rho)$, where $(G,\cY)$ is a Shimura datum with $\dim(\cY) = 1$ such that $G$ is the generic Mumford--Tate group on~$\cY$, and $\rho \colon (G,\cY) \hookrightarrow \fS(V,\phi)$ is an embedding of $(G,\cY)$ into a Siegel modular Shimura datum. 

If we have two such triples $\bigl(G_i,\cY_i,\rho_i \colon (G_i,\cY_i) \hookrightarrow \fS(V_i,\phi_i)\bigr)$, for $i=1,2$, we say these are equivalent if there exist isomorphisms of Shimura data $\alpha \colon (G_1,\cY_1) \isomarrow (G_2,\cY_2)$ and $\beta\colon \fS(V_1,\phi_1) \isomarrow \fS(V_2,\phi_2)$ with $\beta \circ \rho_1 = \rho_2 \circ \alpha$. 

If $(G,\cY,\rho)$ is a triple as above, the adjoint Shimura datum of $(G,\cY)$ is of the form described in~\ref{subsec:(GD,YD)}. In what follows, we fix $F$ and~$D$ as in~\ref{subsec:(GD,YD)} and study only triples $(G,\cY,\rho)$ such that $(G^\ad,\cY^\ad) \cong (\cG_D,\cY_D)$. Let $m = [F:\bbQ]$, and continue to write $\sigma_{\nc} \in \Emb(F)$ for the unique embedding such that $D \otimes_{F,\sigma_{\nc}} \bbR \cong M_2(\bbR)$. We write $\cG_D^\sconn = \Res_{F/\bbQ}\; \SL_{1,D}$ for the simply connected cover of~$\cG_D$, and we fix identifications as in~\eqref{eq:GscExplicit} (with $\cG_D$ in place of $G$). If $\biI$ is a $\Gamma_\bbQ$-orbit of nonempty subsets of~$\Emb(F)$, we denote by $\rho_{\biI} \colon \cG_D^\sconn \to \GL(W_{\biI})$ the corresponding irreducible representation.

\subsection{}\label{subsec:PolExists}
The following summarizes what is explained in \cite{DelCorvalis}, Section~1.1. Let $(G,\cY)$ be a Shimura datum whose weight is defined over~$\bbQ$, and such that for some (equivalently: all) $h \in \cY$ the involution $\mathrm{Inn}\bigl(h(i)\bigr)$ of $(G/w(\bbG_{\mult}))_\bbR$ is a Cartan involution. 

Let $h_0 \in \cY$, and let $\rho \colon G \to \GL(V)$ be a representation such that $\rho \circ h_0$ defines a Hodge structure of weight~$n$ on~$V$, for some $n \in \bbZ$. Then there exist:
\begin{itemize}
\item a character $\nu \colon G \to \bbG_{\mult}$ such that $\nu \circ w \colon  \bbG_{\mult} \to \bbG_{\mult}$ is given by $z \mapsto z^{-2n}$,
\item a bilinear form $\phi \colon V \times V \to \bbQ(-n)$, 
\end{itemize}
such that $\phi(gv,gv^\prime) = \nu(g) \cdot \phi(v,v^\prime)$ for all $g \in G$ and $v$, $v^\prime \in V$, and such that $\phi$ is a polarization of the Hodge structure on~$V$ given by $\rho \circ h_0$. The form~$\phi$ is symmetric (resp.\ symplectic) if the weight~$n$ is even (resp.\ odd). If $h = g\cdot h_0 \in \cY$ is any other element, either~$\phi$ or~$-\phi$ (depending on the sign of~$\nu(g)$) is a polarization of the Hodge structure defined by $\rho \circ h$. 

For our purposes, it suffices to consider the case where $\cY$ is $1$-dimensional and the Hodge structure on~$V$ is of type $(-1,0) + (0,-1)$. {}From now on, we assume this. Let $h_0$, $\nu$ and~$\phi$ be as above. Then $\rho$ defines an embedding of~$(G,\cY)$ into the Siegel modular datum~$\fS(V,\phi)$ and $\bigl(G,\cY,\rho \colon (G,\cY) \hookrightarrow \fS(V,\phi)\bigr)$ is a triple as in~\ref{subsec:ClassifProb}. 

For a given representation~$\rho$, the form~$\phi$ is not unique in general. To analyse this, we first make the simplifying assumption that~$\rho$ is isotypical, i.e., $\rho$ is isomorphic to a sum of copies of an irreducible representation. (We shall return to the general case in Section~\ref{sec:Nonsimple}.) The endomorphism algebra $A = \End(\rho)$ is then a matrix algebra over a finite dimensional division algebra over~$\bbQ$. The involution~$\dagger$ on~$A$ that is induced by~$\phi$ is positive, so that $(A,\dagger)$ is a pair of the type classified by Albert. (See \cite{MAV}, \S~21, for instance.) The set $A^\sym = \bigl\{f\in A \bigm| f = f^\dagger\bigr\}$ of symmetric elements in~$A$ is a formally real Jordan algebra for the product given by $f_1 \star f_2 = (f_1f_2 + f_2f_1)/2$, and the totally positive elements in~$A^\sym$ form an open cone~$A^{\sym,+}$. (Note that $A^\sym$ is a Jordan algebra over~$\bbQ$, so by `cone' we here mean a cone in a $\bbQ$-vector space. Further, writing~$E_0$ for the field of $\dagger$-symmetric elements in the centre of~$A$, which is a totally real field, we call an element $f \in A^\sym$ totally positive if its image in the Jordan algebra $(A\otimes_{E_0,\iota} \bbR)^\sym$ is positive for every embedding $\iota \colon E_0 \to \bbR$.) With this notation, if $h_0 \in \cY$ is Hodge generic, every other polarization form for the Hodge structure given by $\rho\circ h_0$ is of the form $\psi(v,v^\prime) = \phi(av,v^\prime)$ for some $a \in A^{\sym,+}$. Conversely, for every such~$a$ the form $\psi_a(v,v^\prime) = \phi(av,v^\prime)$ is a polarization, and $\bigl(G,\cY,\rho \colon (G,\cY) \hookrightarrow \fS(V,\psi_a)\bigr)$ is again a triple as in~\ref{subsec:ClassifProb}. 

In general, it is somewhat complicated to say under what conditions on $a\in A^{\sym,+}$ the forms~$\phi$ and~$\psi_a$ give rise to equivalent triples. A sufficient condition for these triples to be equivalent is that there exists a $G$-equivariant similitude $(V,\phi) \to (V,\psi_a)$, which happens if and only if $a = \nu\cdot b^\dagger b$ for some $b \in A^*$ and $\nu\in \bbQ^*$. (Note that a Siegel modular datum~$\fS(V,\phi)$ does not change if we multiply~$\phi$ by a scalar in~$\bbQ^*$.) If all automorphisms of the Shimura datum $(G,\cY)$ are inner, this is also a necessary condition; but in general $(G,\cY)$ can have non-inner automorphisms.
\medskip

After these preliminaries, we now discuss two constructions that, together, describe all possible triples $(G,\cY,\rho)$ as in~\ref{subsec:ClassifProb} for which the representation~$\rho$ is isotypical. (See Theorem~\ref{thm:MainThm}.) This is essentially Section~\ref{sec:AV1dimSV} in reverse. We fix $(\cG_D,\cY_D)$ as in~\ref{subsec:ClassifProb}.

\subsection{Construction~1.}\label{Constr1} Let $\tilde{\rho}_{\mathrm{s}} \colon \cG_D^\sconn \to \GL(V)$ be the corestriction representation; this is the case $\biI = \{\Emb(F)\}$ that was discussed in Example~\ref{exa:111Rep}, applied with $L = F$. Let $\tilde{G} = \bbG_{\mult,\bbQ} \times \cG_D^\sconn$, and let $\tilde\rho \colon \tilde{G} \to \GL(V)$ be given by $(c,g) \mapsto c\cdot \tilde{\rho}_{\mathrm{s}}(g)$. Let $G = \Image(\tilde{\rho})$ be the image of~$\tilde\rho$, and let $\rho \colon G \hookrightarrow \GL(V)$ be the induced representation. 

Let $\tilde\mu \colon \bbG_{\mult,\bbC} \to \tilde{G}_\bbC$ be the cocharacter given by $z \mapsto z$ on the first factor of~$\tilde{G}$ and by formula~\eqref{eq:tildemus} on the factor~$\cG^\sconn_{D,\bbC}$. As $\bm{\mu}_2 = \bigl\{\pm 1\bigr\} \subset \bbG_{\mult,\bbC}$ lies in the kernel of~$\tilde\rho \circ \tilde\mu$, there exists a unique cocharacter $\mu \colon \bbG_{\mult,\bbC} \to G_\bbC$ such that $\tilde\rho \circ \tilde\mu$ lifts the square of $\rho \circ \mu$. Let $h \colon \bbG_{\mult,\bbR} \to G_\bbR$ be the homomorphism such that $h_\bbC = \mu \cdot \bar\mu$, and let $\cY$ be the $G(\bbR)$-conjugacy class of~$h$. The pair $(G,\cY)$ is a Shimura datum whose weight is defined over~$\bbQ$ such that $(G^\ad,\cY^\ad) \cong (\cG_D,\cY_D)$. (See also~\ref{thm:MainThm}.) It is clear from the construction that $G/w(\bbG_{\mult}) = G^\ad$ and that $G$ is the generic Mumford--Tate group on~$\cY$. 

Let $r$ be a positive integer, and consider the representation~$\rho^{\oplus r}$. 
By what was explained in~\ref{subsec:PolExists}, there exists a polarization form~$\phi$ on~$V^{\oplus r}$ such that $\rho^{\oplus r}$ factors through $\GSp(V^{\oplus r},\phi)$; we choose one. Then $\bigl(G,\cY,\rho \colon (G,\cY) \hookrightarrow \fS(V^{\oplus r},\phi)\bigr)$ is a triple as in~\ref{subsec:ClassifProb}.

The endomorphism algebra~$A_0$ of the representation~$\rho$ is described as in Proposition~\ref{prop:Albert123}\ref{Albert123-3}. The endomorphism algebra~$A$ of~$\rho^{\oplus r}$ is the matrix algebra~$M_r(A_0)$. Let $\dagger$ be the involution of~$A$ given by the chosen form~$\phi$, and let the notation $A^{\sym,+} \subset A^\sym$ be as in~\ref{subsec:PolExists}. For $a \in A^{\sym,+}$ the form~$\psi_a$ given by $\psi_a(v,v^\prime) = \phi(av,v^\prime)$ has the property that $\rho$ factors through $\GSp(V^{\oplus r},\psi_a)$, and $\bigl(G,\cY,\rho \colon (G,\cY) \hookrightarrow \fS(V^{\oplus r},\psi_a)\bigr)$ is again a triple as in~\ref{subsec:ClassifProb}.

\subsection{Construction~2.}\label{Constr2} Let $\biI$ be a $\Gamma_\bbQ$-orbit of nonempty subsets of~$\Emb(F)$, with $\biI \neq \bigl\{\Emb(F)\bigr\}$, and let $\rho_{\biI} \colon \cG_D^\sconn \to \GL(W_{\biI})$ be the corresponding irreducible representation. Let $\ell(\biI)$ be the cardinality of the sets in~$\biI$, let $\cE_{\biI} = \End(\rho_{\biI})$, and let $k_{\biI}$ be the centre of~$\cE_{\biI}$, which is a totally real field. As explained in Remark~\ref{rem:rhoI}, there is a natural identification $\biI = \Emb(k_{\biI})$, and we use this to view $\Sigma = \bigl\{I\in \biI\bigm| \sigma_{\nc} \notin I\bigr\}$ as a subset of~$\Emb(k_{\biI})$. Let $k_{\biI} \subset E_0$ be a finite totally real field extension and $E_0 \subset E$ a totally imaginary quadratic extension (so $E$ is a CM field). Let $\Phi \subset \Emb(E)$ be a primitive partial CM type relative to $(k_{\biI},\Sigma)$. (Note that such a type~$\Phi$ may not exist for all choices of~$E$.) Let $H$ be the unique simple right $(E \otimes_{k_{\biI}} \cE_{\biI})$-module, and define $V = H \otimes_{\cE_{\biI}} W_{\biI}$, viewed as a $\bbQ$-vector space.

Define a multiplication type~$\ff$ as in \eqref{eq:multtype}. Let $\tilde{\mu}_{\mathrm{c}} \colon \bbG_{\mult,\bbC} \to T_{E,\bbC}$ be the cocharacter that is given, as element of $\mathsf{X}_*(T_E)$, by $\tilde{\mu}_{\mathrm{c}} = \sum_{\phi \in \Emb(E)}\;  \frac{2\cdot \ff(\phi)}{n} \cdot \check{\mathrm{e}}_\phi$, where $n = \dim_E(V)$, which equals~$2^{\ell(\biI)}$ if the class of $E \otimes_{k_{\biI}} \cE_{\biI}$ in~$\Br(E)$ is trivial, and else equals $2^{\ell(\biI)+1}$. Let $Z \subset T_E$ be the smallest subtorus such that $\tilde{\mu}_{\mathrm{c}}$ factors through~$Z_\bbC$. Let $\tilde{\mu}_{\mathrm{s}} \colon \bbG_{\mult,\bbC} \to \cG_{D,\bbC}^\sconn$ be as in Proposition~\ref{prop:repdescr}\ref{repdescr2} (here with $G= \cG_D$), and let $\tilde{\mu} = (\tilde{\mu}_{\mathrm{c}},\tilde{\mu}_{\mathrm{s}}) \colon \bbG_{\mult,\bbC} \to Z_\bbC \times \cG_{D,\bbC}$.

We have a natural action of $T_E$ on~$H$ by $\cE_{\biI}$-linear automorphisms. This gives a representation of~$T_E$ on~$V$ which commutes with the action of~$\cG_D^\sconn$ and therefore defines a representation $\tilde\rho \colon T_E \times \cG_D^\sconn \to \GL(V)$. Let 
\[
G = \tilde{\rho}\bigl(Z\times \cG_D^\sconn \bigr) \subset \GL(V)
\]
be the image of $Z\times \cG_D^\sconn$ under~$\tilde\rho$, and write $\rho \colon G \hookrightarrow \GL(V)$ for the induced representation. 

We claim that $\bm{\mu}_2 \subset \bbG_{\mult,\bbC}$ lies in the kernel of~$\tilde\rho \circ \tilde\mu$. To see this, we observe that (depending on the structure of $E \otimes_{k_{\biI}} \cE_{\biI}$) either~$V$ or~$V^{\oplus 2}$ is isomorphic to $\St_E \boxtimes_{k_{\biI}} W_{\biI}$ as a representation of $T_E \times \cG_D^\sconn$. It therefore suffices to show that the action of $-1 \in \bbC^* = \bbG_{\mult}(\bbC)$ on $(\St_E \boxtimes_{k_{\biI}} W_{\biI}) \otimes_\bbQ \bbC$ is trivial. We have a decomposition~\eqref{eq:StEWItensorC}.  For a given $\phi \in \Emb(E)$, if $\sigma_{\nc} \in I_\phi$ then $\tilde\mu_{\mathrm{c}}(-1)$ and $\tilde\mu_{\mathrm{s}}(-1)$ both act on the summand $\boxtimes_{\sigma \in I_\phi}\; \St_\sigma$ as $-\id$; if $\sigma_{\nc} \notin I_\phi$ then both elements acts as the identity. In either case, therefore, we see that $-1 \in \bbC^* = \bbG_{\mult}(\bbC)$ acts as the identity, which proves our claim. It follows that there exists a unique cocharacter $\mu \colon \bbG_{\mult,\bbC} \to G_\bbC$ such that $\tilde\rho \circ \tilde\mu$ lifts the square of $\rho \circ \mu$. Let $h \colon \bbG_{\mult,\bbR} \to G_\bbR$ be the homomorphism such that $h_\bbC = \mu \cdot \bar\mu$, and let $\cY$ be the $G(\bbR)$-conjugacy class of~$h$. The pair $(G,\cY)$ is a Shimura datum whose weight is defined over~$\bbQ$ (see~\ref{thm:MainThm}), and $\rho \circ w \colon \bbG_{\mult,\bbR} \to \GL(V)_\bbR$ is given by $z \mapsto z\cdot \id_V$. By construction, the adjoint Shimura datum $(G^\ad,\cY^\ad)$ is isomorphic to $(\cG_D,\cY_D)$.

Let $r$ be a positive integer, and consider the representation~$\rho^{\oplus r}$.
It follows from its definition that $Z$ is contained in the torus $U_E \subset T_E$ given by $U_E = \bigl\{x \in E^* \bigm| x\bar{x} \in \bbQ^*\bigr\}$. As $(U_E/\bbG_{\mult})_\bbR$ is compact, $\mathrm{Inn}\bigl(h(i)\bigr)$ is a Cartan involution of $(G/w(\bbG_{\mult}))_\bbR$. It therefore follows from what was explained in~\ref{subsec:PolExists} that there exists a polarization form~$\phi$ on~$V^{\oplus r}$ such that $\rho^{\oplus r}$ factors through $\GSp(V^{\oplus r},\phi)$; we choose one. 

By Lemma~\ref{lem:Eisquat}, the assumption that $\Phi$ is primitive implies that $\cE_{\biI}$ is a quaternion algebra over~$k_{\biI}$. The endomorphism algebra of~$\tilde\rho$ is isomorphic to $\End_{E \otimes_{k_{\biI}} \cE_{\biI}}(H)$. The proof of Proposition~\ref{prop:PhiProp} shows that $\End(\rho) = \End(\tilde\rho)$; hence the representation~$\rho$ is irreducible. More precisely, if we write $A_0 = \End(\rho)$ then either $E \otimes_{k_{\biI}} \cE_{\biI} \cong M_2(E)$, in which case $A_0 = E$, or $E \otimes_{k_{\biI}} \cE_{\biI}$ is a quaternion algebra over~$E$, in which case $A_0 = E \otimes_{k_{\biI}} \cE_{\biI}$. The endomorphism algebra~$A$ of~$\rho^{\oplus r}$ is the matrix algebra~$M_r(A_0)$, on which~$\phi$ induced an involution. For $a \in A^{\sym,+}$, let $\psi_a(v,v^\prime) = \phi(av,v^\prime)$; then $\rho^{\oplus r}$ factors through $\GSp(V^{\oplus r},\psi_a)$, and $\bigl(G,\cY,\rho \colon (G,\cY) \hookrightarrow \fS(V^{\oplus r},\psi_a)\bigr)$ is a triple as in~\ref{subsec:ClassifProb}.

\begin{theorem}\label{thm:MainThm}
The constructions in \ref{Constr1} and \ref{Constr2} give triples $(G,\cY,\rho)$ as in~\ref{subsec:ClassifProb} with the property that the representation~$\rho$ is isotypical. Every such triple is obtained in this way.
\end{theorem}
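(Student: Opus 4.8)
The plan is to establish the two halves of the statement in turn. For the assertion that Constructions~\ref{Constr1} and~\ref{Constr2} produce legitimate triples, most of the verification is already carried out in those sections; the points left open there are that $(G,\cY)$ satisfies the Shimura datum axioms, that its weight is defined over~$\bbQ$, and that $G$ is the generic Mumford--Tate group on~$\cY$. For the conditions (2.1.1.1--3) of~\cite{DelCorvalis} I would note that in both constructions $(G^\ad,\cY^\ad)\cong(\cG_D,\cY_D)$, so that these are inherited from that known datum: the connected centre contributes only Hodge type $(0,0)$ to~$\Lie(G)$, and is irrelevant both for the Cartan involution condition and for the non-compactness of the $\bbQ$-simple factors of~$\cG_D$. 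The weight is the cocharacter~$w$ with $\rho\circ w\colon z\mapsto z\cdot\id_V$; since $\rho$ is a closed immersion defined over~$\bbQ$ and the scalars $\bbG_{\mult}\cdot\id_V$ lie in~$G$ (for Construction~2 this uses that $\ff(\phi)+\ff(\bar\phi)=n$, so that the cocharacter $\sum_\phi\check{\mathrm{e}}_\phi$ of~$T_E$ lies in~$\mathsf{X}_*(Z)$), it follows that $w$ descends to~$\bbQ$. That $G$ is the generic Mumford--Tate group holds because $G^\der=\cG_D^\sconn$ is forced into it ($\cG_D$ being $\bbQ$-simple and non-compact at~$\sigma_\nc$), while the connected centre of the generic Mumford--Tate group contains the $\bbQ$-torus generated by the centre-part of the generic cocharacter, which by construction is exactly~$Z$ (resp.\ $\bbG_{\mult}\cdot\id_V$). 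The remaining requirements --- $\dim\cY=1$, isotypicality of~$\rho^{\oplus r}$, and the fact that $\rho^{\oplus r}$ embeds $(G,\cY)$ into a Siegel modular datum --- are immediate from the constructions together with~\ref{subsec:PolExists}.

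For the converse, let $\bigl(G,\cY,\rho\colon(G,\cY)\hookrightarrow\fS(V,\phi)\bigr)$ be a triple as in~\ref{subsec:ClassifProb} with $\rho$ isotypical. Write $\rho\cong\rho_0^{\oplus r}$ with $\rho_0$ irreducible and, compatibly, $V\cong V_0^{\oplus r}$ as Hodge structures; as $\rho$ is injective, so is~$\rho_0$, hence for a Hodge-generic $h_0\in\cY$ the pair $(V_0,h_0)$ is the rational Hodge structure of a simple complex abelian variety~$X_0$ (simple because $V_0$ is $\bbQ$-irreducible as a $G$-module) with Mumford--Tate group~$G$, and thus with associated Shimura datum $(G,\cY)$; and $\dim\cY=1$. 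Therefore $X_0$ is exactly of the type analysed in Section~\ref{sec:AV1dimSV}: after fixing an isomorphism $G^\sconn\cong\Res_{F/\bbQ}\SL_{1,D}$ with the $F$, $D$ of~\ref{subsec:ClassifProb} (and the identifications~\eqref{eq:GscExplicit}), the Albert-type dichotomy applies.

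If $X_0$ is of Albert type~I, II or~III, Proposition~\ref{prop:Albert123} gives $Z=\bbG_{\mult}\cdot\id$ and identifies the $G^\sconn$-module~$V_0$ with the corestriction representation, while Proposition~\ref{prop:repdescr} pins down $\tilde\mu=(\tilde\mu_{\mathrm{c}},\tilde\mu_{\mathrm{s}})$ and hence~$h$ and~$\cY$; matching these against~\ref{Constr1} (with its ``$V$'' taken to be~$V_0$) shows that $(G,\cY,V_0)$ is the output of Construction~1, and then $\rho=\rho_0^{\oplus r}$ and, by~\ref{subsec:PolExists}, the polarization form on~$V_0^{\oplus r}$ has the form $\psi_a$ for some $a\in A^{\sym,+}$; so the triple is obtained from Construction~1 with this~$r$ and this~$a$. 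If $X_0$ is of Albert type~IV, Propositions~\ref{prop:Albert4} and~\ref{prop:PhiProp} provide the orbit~$\biI\neq\{\Emb(F)\}$, the tower $k_{\biI}\subseteq E_0\subseteq E$ of fields, a primitive partial CM type~$\Phi$ relative to $\bigl(k_{\biI},\{I\in\biI\mid\sigma_\nc\notin I\}\bigr)$, and the multiplication type~\eqref{eq:multtype} --- precisely the input of Construction~2. Using the explicit descriptions in~\ref{subsec:threecases} one identifies $V_0$ (or $V_0^{\oplus 2}$, in ``Case~1'') with $H\otimes_{\cE_{\biI}}W_{\biI}$ compatibly with the $T_E\times\cG_D^\sconn$-action, whence $G=\tilde\rho(Z\times\cG_D^\sconn)$ and $\cY$ is the conjugacy class built from~$\tilde\mu$; again $\rho=\rho_0^{\oplus r}$ and the polarization is a~$\psi_a$, so the triple comes from Construction~2.

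The step that needs the most care --- and the place where the two directions are genuinely glued together --- is the \emph{primitivity} of~$\Phi$. In Construction~2 it is imposed as a hypothesis, and Proposition~\ref{prop:PhiProp} (through the equality $\End_{Z\times\cG_D^\sconn}(V)=\End_{T_E\times\cG_D^\sconn}(V)$, i.e.\ ``$Kk_{\biI}=E$'' in its proof) is exactly what guarantees that $\End(\rho)$ is a division algebra, that $\rho$ is irreducible, and that $G$ is the generic Mumford--Tate group --- so that the construction really does land in~\ref{subsec:ClassifProb} with isotypical~$\rho$. In the converse, one uses the same Proposition to know that the type~$\Phi$ extracted from a genuine simple~$X_0$ is primitive, hence a legal input. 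Beyond this, the work is bookkeeping: tracking the identifications of $G^\sconn$ with $\Res_{F/\bbQ}\SL_{1,D}$, the roles of~$r$ and of $a\in A^{\sym,+}$, and the split between Cases~1 and~2, so that the constructed triple agrees with the given one up to the equivalence of~\ref{subsec:ClassifProb}.
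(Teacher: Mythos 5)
Your proposal is correct and follows the same route as the paper's own proof: the forward direction is the verification already carried out inside Constructions~\ref{Constr1} and~\ref{Constr2} together with~\ref{subsec:PolExists}, and the converse reduces to the analysis of a simple abelian variety in Section~\ref{sec:AV1dimSV} via the irreducible constituent~$\rho_0$. The paper states these steps very tersely (it simply asserts that the listed properties give a Shimura datum with weight over~$\bbQ$, and defers irreducibility to the primitivity discussion in~\ref{Constr2} and the converse to Section~\ref{sec:AV1dimSV}); you have usefully spelled out the intermediate checks, such as why $\bbG_{\mult}\cdot\id_V\subset G$ forces the weight down to~$\bbQ$ and why $G$ is the generic Mumford--Tate group.
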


\begin{proof}
In either case, the construction yields a $\bbQ$-group~$G$, a homomorphism $h \colon \bbS \to G_\bbR$, and an embedding $\rho \colon G \hookrightarrow \GL(V)$ such that
\begin{itemize}
\item the Hodge structure on~$V$ defined by $\rho\circ h$ is of type $(-1,0) + (0,-1)$;
\item $G^\ad \cong \cG_D$, and $\ad \circ h \in \cY_D$.
\end{itemize}
With $\cY = G(\bbR)\cdot h$, these properties imply that $(G,\cY)$ is a Shimura datum whose weight is defined over~$\bbQ$, such that $(G^\ad,\cY^\ad) \cong (\cG_D,\cY_D)$.

In either construction, it is clear that the triples $(G,\cY,\rho)$ that we obtain satisfy the conditions of~\ref{subsec:ClassifProb}. In Construction~1 it is clear that the representation~$\rho$ is irreducible, and as explained in~\ref{Constr2}, in Construction~2 the irreducibility follows from the assumption that $\Phi$ is primitive.

The last assertion follows from the results in Section~\ref{sec:AV1dimSV}.
\end{proof}

\subsection{Remark.}
In the situation considered in~\ref{Constr1} (Construction~1), all automorphisms of $(G,\cY)$ are inner. To see this, we use that $G^\ad \cong G/\bbG_{\mult}$, so that the map $G(\bbQ) \to G^\ad(\bbQ)$ is surjective. As explained in~\ref{subsec:(GD,YD)}, all automorphisms of the adjoint Shimura datum are inner. Therefore, if $\alpha$ is an automorphism of $(G,\cY)$ then possibly after changing~$\alpha$ by an inner automorphism, we may assume $\alpha$ induces the identity on~$G^\der$. The only non-inner automorphism of~$G$ with this property is the one given by $z \mapsto z^{-1}$ on the centre~$\bbG_{\mult}$ and by the identity on~$G^\der$; but this automorphism does not give an automorphism of $(G,\cY)$ (it does not even preserve the weight).

As discussed at the end of Section~\ref{subsec:PolExists}, the fact that all automorphisms of $(G,\cY)$ are inner implies that the forms~$\phi$ and~$\psi_a$ give rise to equivalent triples if and only if $a = \nu \cdot b^\dagger b$ for some $\nu \in \bbQ^*$ and $b \in A^*$. Let us also note that if $r=1$ and the Albert type is I or~III (i.e., either $m = [F:\bbQ]$ is even or the Brauer class of $\Cores_{F/\bbQ}(D)$ is trivial) then in fact $\phi$ is, up to scalars, the unique symplectic form on~$V$ such that $\rho$ factors through $\GSp(V,\phi)$.

In Construction~2 (Albert type~IV), it is in general more complicated to say when different polarization forms give rise to equivalent triples, as in this case $(G,\cY)$ may have non-inner automorphisms. It is of course still true that $\phi$ and~$\psi_a$ give equivalent triples if $a = \nu \cdot b^\dagger b$ for some $\nu \in \bbQ^*$ and $b \in A^*$, but this may in general not be a necessary condition.

\subsection{Remark.}\label{rem:ViehZuo}
As mentioned in the introduction, it appears that at some places in the literature there are misconceptions about the classification of `Shimura curves'. As a concrete example, we explain why Theorem~0.7 of~\cite{ViehZuo} is not true. We briefly recall the setting. The authors start (op.\ cit., Example~0.6) by considering a quaternion algebra~$A$ over a number field~$F$ such that $A$ splits at precisely one real place. If $L \subset F$ is a subfield, there exists an embedding $j\colon \Cores_{F/L}(A) \hookrightarrow M_{2^\mu}(L)$ with $\mu = [F:L]$ if $\Cores_{F/L}(A)$ has trivial Brauer class and $\mu = [F:L]+1$ otherwise. Next the authors say there exists a complex Shimura curve~$Y^\prime$ such that $j$ gives rise to a local system~$\mathbb{V}_L$ of $L$-vector spaces on~$Y^\prime$ whose underlying $\bbQ$-local system~$\mathbb{X}_{A,L}$ is irreducible. The algebraic monodromy group of~$\mathbb{X}_{A,L}$ has $\cG_A = \Res_{F/\bbQ} \PGL_{1,A}$ as its adjoint group, so up to isomorphism, $A$ is determined by~$\mathbb{X}_{A,L}$. 

It is not so hard to relate the construction of~$Y^\prime$ and~$\mathbb{X}_{A,L}$ to our classification. Let us do this in the cases $L=\bbQ$ and $L=F$. For $L=\bbQ$, the Shimura curve~$Y^\prime$ that is constructed in~\cite{ViehZuo} corresponds to the Shimura datum of our Construction~1 (see~\ref{Constr1}), and the local system~$\mathbb{X}_{A,\bbQ}$ is the one that corresponds to the embedding $\rho \colon (G,\cY) \hookrightarrow \fS(V,\phi)$ as in our construction; in other words, the monodromy representation is the corestriction representation as in Example~\ref{exa:111Rep}. If $L=F$, the Shimura curve that is obtained corresponds to the example we have discussed in~\ref{exa:unitary}; in this case the monodromy representation is as in Example~\ref{exa:rhoIrep}, where we take for~$\biI$ the set of singletons in~$\Emb(F)$. 

In \cite{ViehZuo}, Theorem~0.7, the authors consider a complete nonsingular curve~$Y$ and an abelian scheme $f \colon X \to Y$  that reaches the Arakelov bound. The assertion is that there exists a quaternion algebra~$A$ as above and an \'etale covering $Y^\prime \to Y$, for some~$Y^\prime$ as above, such that $X^\prime = X \times_Y Y^\prime$ decomposes, up to isogeny, as a product of a constant factor~$B$ and abelian schemes $h_i \colon Z_i \to Y^\prime$ ($i=1,\ldots,\ell$) whose generic fibres are simple, and such that each $R^1h_{i,*}\bbQ_{Z_i}$ is a direct sum of copies of~$\mathbb{X}_{A,L_i}$ for some subfield $L_i \subset F$.

According to \cite{Moeller}, Theorem~1.2, if $Y \hookrightarrow \cA_g$ is an irreducible component of a Shimura curve, the corresponding abelian scheme over~$Y$ reaches the Arakelov bound. Therefore, if we take an example as in Construction~2 (see~\ref{Constr2}) with $F$ of prime degree over~$\bbQ$ (so that $F$ has no subfields other than $\bbQ$ and~$F$) and with $\biI$ not the set of singletons in~$\Emb(F)$ (and of course also $\biI \neq \{\Emb(F)\}$), we obtain a counterexample to \cite{ViehZuo}, Theorem~0.7.

\section{The nonsimple case}\label{sec:Nonsimple}

\subsection{}\label{subsec:nonsimpleAV}
In Section~\ref{sec:AV1dimSV} we have discussed simple abelian varieties with $1$-dimensional associated Shimura datum. We now consider the general case. 

Let $X$ be a complex abelian variety. Let $(G,\cY)$ be the Shimura datum given by~$X$, and assume $\dim(\cY) = 1$. There exists an isogeny $f \colon X \to X_0 \times X_1^{m_1} \times \cdots \times X_t^{m_t}$, where $X_0$ is an abelian variety of CM type, $X_1,\ldots,X_t$ (with $t\geq 1$) are simple complex abelian varieties that are not of CM type, no two of which are isogenous, and $m_1,\ldots,m_t$ are positive integers. Write $V = H_1(X,\bbQ)$ and $V_i = H_1(X_i,\bbQ)$. The isogeny~$f$ induces an isomorphism $V \isomarrow V_0 \oplus V_1^{\oplus m_1} \oplus \cdots \oplus V_t^{\oplus m_t}$, and we use this to view $\prod_{i=0}^t\; \GL(V_i)$ as a subgroup of~$\GL(V)$, with $\GL(V_i)$ for $i \geq 1$ acting diagonally on~$V_i^{\oplus m_i}$. 

Let $(G_i,\cY_i)$ be the Shimura datum given by~$X_i$. Then $G = \MT(X) \subset \GL(V)$ is a subgroup of $G_0 \times G_1 \times \cdots \times G_t$. The projections $\pr_i \colon G \to G_i$ are surjective and for $i\geq 1$ they induce isomorphisms of adjoint Shimura data $\pr^\ad_i \colon (G^\ad,\cY^\ad) \isomarrow (G_i^\ad,\cY_i^\ad)$. The datum $(G,\cY)$ can be recovered from $(G^\ad,\cY^\ad)$ together with the data $(G_i,\cY_i)$ and the isomorphisms~$\pr^\ad_i$, as follows. We have
\begin{equation}\label{eq:fusingY}
\cY \xrightarrow[\prod p_i]{~\sim~} \Bigl\{(h_0,h_1,\ldots,h_t) \in \prod_{i=0}^t\; \cY_i \Bigm| (\pr^\ad_1)^{-1} \circ h_1 = \cdots = (\pr^\ad_t)^{-1} \circ h_t \Bigr\}\, , 
\end{equation}
where we recall that the adjoint map $G_i \to G_i^\ad$ identifies~$\cY_i$ with a union of connected components of~$\cY_i^\ad$. (Note that $\cY_0$ is a singleton.) The group~$G$ can be recovered as the smallest subgroup of~$\prod_{i=0}^t\, G_i$ such that all $h \in \cY$ factor through~$G_\bbR$.

\subsection{}
For the general classification of triples $(G,\cY,\rho)$ as in~\ref{subsec:ClassifProb}, the solution is obtained as follows. First we fix $F$ and~$D$ as in~\ref{subsec:(GD,YD)}, so that we have an adjoint datum $(\cG_D,\cY_D)$. Next we choose a triple $(G_0,\cY_0,\rho_0)$ where $(G_0,\cY_0)$ is a $0$-dimensional Shimura datum (so $G_0$ is a torus and $\cY_0$ is a singleton), and $\rho_0$ is an embedding $(G_0,\cY_0) \hookrightarrow \fS(V_0,\phi_0)$ into a Siegel modular datum. All such triples are described in terms of classical CM theory. As a next step, we fix finitely many triples $(G_i,\cY_i,\rho_i\colon (G_i,\cY_i) \hookrightarrow \fS(V_i,\phi_i))_{i=1,\ldots,t}$ as in~\ref{subsec:ClassifProb} such that $\rho_i$ is an isotypical representation, and such that there exist isomorphisms $p_i \colon (\cG_D,\cY_D) \isomarrow (G_i^\ad,\cY_i^\ad)$. For each~$i$ we fix such an isomorphism~$p_i$ (which, as remarked in~\ref{subsec:(GD,YD)}, is unique up to inner automorphisms of~$\cG_D$.) Via these choices we can view~$\rho_i \colon G_i \to \GL(V_i)$ as a representation of~$\cG_D^\sconn$. We make these choices in such a manner that there are no indices $1\leq i < j\leq t$ such that $\rho_i$ and~$\rho_j$, viewed as representations of~$\cG_D^\sconn$, have isomorphic underlying irreducible representations. This condition is independent of how we choose the isomorphisms~$p_i$. 

Define $V = V_0 \oplus V_1 \oplus \cdots \oplus V_t$ and define a symplectic form~$\phi$ on~$V$ by $\phi = \phi_0 \perp \phi_1 \perp \cdots \perp \phi_t$. This gives an embedding $\rho^\sharp \colon \prod_{i=0}^t\, G_i \hookrightarrow \GSp(V,\phi)$. Define~$\cY$ as in the right hand side of~\eqref{eq:fusingY} (with $p_i$ instead of~$\pr_i^\ad$), let $G \subset \prod_{i=0}^t\, G_i$ be the smallest $\bbQ$-subgroup such that all $h \in \cY$ factor through~$G_\bbR$, and let $\rho$ be the restriction of~$\rho^\sharp$ to~$G$. This gives us a triple $(G,\cY,\rho)$ as in~\ref{subsec:ClassifProb} and it follows from what was explained in~\ref{subsec:nonsimpleAV} that every such triple is obtained in this way.

%%
%% make bibliography small:
%%
{\small

} %% end of small for the blibliography
\bigskip

\noindent
\texttt{b.moonen@science.ru.nl}

\noindent
Radboud University Nijmegen, IMAPP, Nijmegen, The Netherlands

%\newpage
%\input{Notes}

\end{document}